\documentclass[11pt]{amsart}

\usepackage[T1]{fontenc}
\usepackage[utf8]{inputenc}
\usepackage{amsmath,amssymb,amsthm,mathtools}
\usepackage{enumitem}
\usepackage[hidelinks]{hyperref}
\usepackage[margin=1.12in]{geometry}
\usepackage{microtype}
\usepackage{xcolor} 

\newtheorem{theorem}{Theorem}[section]
\newtheorem{proposition}[theorem]{Proposition}
\newtheorem{lemma}[theorem]{Lemma}
\newtheorem{corollary}[theorem]{Corollary}

\newcommand{\R}{\mathbb R}
\newcommand{\Z}{\mathbb Z}
\newcommand{\eps}{\varepsilon}
\newcommand{\Bad}{\operatorname{Bad}}
\newcommand{\diam}{\operatorname{diam}}
\newcommand{\dimH}{\dim_{\mathrm H}}
\newcommand{\cP}{\mathcal P}

\title[Uniform Littlewood counterexamples of positive dimension]{The uniform Littlewood conjecture fails on a set of positive Hausdorff dimension}

\author[Nikita Shulga]{Nikita Shulga}
\address{Nikita Shulga, Sydney Mathematical Research Institute, The University of Sydney, NSW, Australia}
\email{nikita.shulga@sydney.edu.au}

\begin{document}

\begin{abstract}
The uniform Littlewood conjecture (ULC), introduced by Bandi,
Fregoli and Kleinbock, asserts in the two-number case that
$$
 \lim_{Q\to\infty} Q\min_{1\le n\le Q}\|n\xi\|\,\|n\zeta\|=0
$$
for all real $\xi,\zeta$.  It is proven to hold for almost every pair $(\xi,\zeta)$.
Schleischitz, however, has recently disproved the full statement and showed that the set of
counterexamples contains a dense $G_\delta$ set.

We prove that a set of counterexample pairs with the first coordinate being a badly approximable number has Hausdorff dimension at least $3/2$.  We further
show that the set of badly approximable numbers $\xi$ for which there exists
$\zeta$ such that $(\xi,\zeta)$ is a counterexample to ULC has full Hausdorff
dimension.  This contrasts with the classical Littlewood conjecture, for which
the set of possible counterexamples is known to have Hausdorff dimension $0$.
\end{abstract}

\maketitle



\bigskip

\section{Introduction}

Write $\|x\|$ for the distance from $x\in\R$ to the nearest integer.  The
classical Littlewood conjecture asserts that, for every pair of real numbers
$\xi,\zeta$,
$$
 \liminf_{n\to\infty} n\|n\xi\|\,\|n\zeta\|=0.
$$
Bandi, Fregoli and Kleinbock recently considered the corresponding uniform
form, in which one asks whether the best denominator $1\le n\le Q$ always makes
$$
 Q\min_{1\le n\le Q}\|n\xi\|\,\|n\zeta\|
$$
tend to zero as $Q\to\infty$ \cite{BandiFregoliKleinbock}.  Thus the
uniform Littlewood conjecture in dimension two is the assertion that
\begin{equation}\label{eq:ULCzero}
 \lim_{Q\to\infty} Q\min_{1\le n\le Q}\|n\xi\|\,\|n\zeta\|=0
\end{equation}
for all real $\xi,\zeta$.

Schleischitz disproved this conjecture by constructing pairs for which the
corresponding limsup is positive \cite{SchleischitzULC}.  In fact, his theorem
shows that the set of such pairs is large in the topological sense.  However, no bound on the Hausdorff dimension of the set of counterexamples was provided. His result also
left open whether failure can occur when one coordinate, or both coordinates,
have bounded partial quotients; this is part of
\cite[Problem~2(i)]{SchleischitzULC}.  A subsequent note by Moshchevitin gave
a short elementary proof that ULC fails \cite{MoshchevitinULC}, but it does
not address the size of the counterexample set or the existence
of a counterexample with a badly approximable coordinate. Some results on the inhomogeneous ULC conjecture are given in \cite{SchleischitzULCing}.

Let $\mathcal U$ denote the set of counterexamples to the uniform Littlewood
conjecture:
$$
 \mathcal U=\left\{(\xi,\zeta)\in\R^2:
 \limsup_{Q\to\infty} Q\min_{1\le n\le Q}\|n\xi\|\,\|n\zeta\|>0\right\}.
$$
Let $\Bad_1$ denote the set of badly approximable real numbers.  For an
irrational real number $\xi$, one has $\xi\in\Bad_1$ if and only if the
continued-fraction partial quotients of $\xi$ are bounded.  In this notation,
Schleischitz asks whether $\mathcal U\cap(\Bad_1\times\Bad_1)$, or at least
$\mathcal U\cap(\Bad_1\times\R)$, is nonempty.

For $A\ge2$, let $F_A$ denote the set of real numbers in $(0,1)$ whose canonical
continued-fraction expansion is finite or infinite and has all partial quotients
at most $A$.  Its irrational part is
$$
 F_A^\infty=\{[0;a_1,a_2,\ldots]:1\le a_j\le A\text{ for all }j\}.
$$
We put $\delta_A=\dimH F_A^\infty$.  Adding the finite bounded expansions does
not change the Hausdorff dimension.  Hensley's counting theorem for bounded
continuants implies that $\delta_A\to1$ as $A\to\infty$ and supplies the
quantitative estimates used below \cite{Hensley1990,Hensley1992}.

Our main metric result is the following lower bound.

\begin{theorem}\label{thm:pair-dim}
If $A\ge2$ satisfies $\delta_A>(\sqrt{21}-1)/4$, then
$$
 \dimH\bigl(\mathcal U\cap(F_A\times\R)\bigr)
 \ge 2\delta_A-\frac12.
$$
In particular,
$$
 \dimH\bigl(\mathcal U\cap(\Bad_1\times\R)\bigr)\ge \frac32.
$$
\end{theorem}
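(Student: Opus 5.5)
We fix $\xi\in F_A^\infty$ with convergents $p_k/q_k$. The key point is that for $\xi$ badly approximable, $\|n\xi\|\gg_A 1/n$ for every $n$. Consequently, forcing $Q\min_{n\le Q}\|n\xi\|\|n\zeta\|\ge c$ along a sequence $Q=Q_k\to\infty$ reduces to a one-dimensional lower bound on $\|n\zeta\|$ that is sensitive only to the size of $\|n\xi\|$.

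Concretely, we will use the three-distance/Ostrowski structure. For $q_{k}\le n<q_{k+1}$ one has $\|n\xi\|\ge \|q_k\xi\|\asymp_A 1/q_k$, and more generally $\|n\xi\|\asymp_A 1/m$ where $m$ is the size of the leading Ostrowski digit block of $n$. The plan is to take $Q_k=q_{k}$ along a sparse subsequence of indices $k_j$ (growing fast) and to require $\|n\zeta\|\ge c\, \|n\xi\|^{-1}/Q_{k_j}$ for all $n\le Q_{k_j}$. Since $\|n\xi\|^{-1}\lesssim n$, a sufficient condition is
$$\|n\zeta\|\ge c\,n/Q\quad\text{for all } n\le Q=Q_{k_j}.$$
This forbids $\zeta$ from lying within $c\,n/(Qn)\cdot$ of rationals $a/n$, i.e. within $c/Q$ of any rational with denominator $\le Q$. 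That is too strong, since such rationals are $1/Q^2$-dense. Therefore we must really use $\|n\xi\|\asymp 1/(\text{Ostrowski scale of }n)$: only $n$ with $\|n\xi\|$ very small (namely $n$ near multiples of large $q_i$) impose strong conditions. The refined condition is $\|n\zeta\|\ge c\,/(Q\|n\xi\|)$. We will count the excluded set at scale $Q$: for each $n\le Q$, the excluded neighborhood of $\{a/n\}$ has measure $\asymp 1/(Q\|n\xi\|)$, and summing $\sum_{n\le Q}1/(Q\|n\xi\|)\asymp Q^{-1}\sum_{n\le Q}\|n\xi\|^{-1}\asymp_A \log Q$. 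This is too large, so instead we will design $\zeta$ as a Cantor set adapted to the $q_i$: roughly, $\zeta$ with $\|q_i\zeta\|$ bounded below for the relevant $i$, e.g. $\zeta$ chosen via the inhomogeneous Ostrowski expansion relative to $\xi$ with digit constraints. We then verify the condition for general $n=\sum b_i q_i$ by the triangle inequality on Ostrowski digits. The Cantor set for $\zeta$ at each scale keeps intervals of length $\asymp 1/q_{k}$ out of $\asymp q_k^{1/2}$-type choices, which is where the $-\tfrac12$ loss should come from.

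For the dimension count, we will build a Cantor set in the pair: $\xi$ ranges over cylinders of $F_A$ of scale $q_k^{-2}$, whose number is controlled by Hensley's counting theorem ($\#\{\text{cylinders with } q\le X\}\approx X^{2\delta_A}$). For each $\xi$-cylinder, $\zeta$ ranges over a fiber Cantor set of local dimension about $\delta_A-\tfrac12$ relative to the $q^{-2}$ scale, or else $1-\ldots$. We then apply a mass distribution principle (a product/fibred version, or Marstrand-type slicing giving $\dim\ge \dim(\text{base})+\inf\dim(\text{fibre})$) to obtain $2\delta_A-\tfrac12=\delta_A+(\delta_A-\tfrac12)$. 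The hypothesis $\delta_A>(\sqrt{21}-1)/4$, i.e. $2\delta_A^2+\delta_A>\tfrac52$, will be exactly the condition that the fibre construction is nonempty: the counting of surviving $\zeta$-intervals, $\approx X^{2\delta_A\cdot\delta_A+\ldots}$ against losses, must be positive.

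The main obstacle is the fibre step: proving that for every admissible $\xi$ a uniformly large set of $\zeta$ satisfies $\|n\zeta\|\,\|n\xi\|\ge c/Q_j$ for all $n\le Q_j$, simultaneously along a sequence $Q_j$. We first try to choose $\zeta$ with the same bounded-type structure coupled to the convergents of $\xi$, and to bound the exceptional measure by a sum over Ostrowski scales using Hensley's estimates. Taking the $Q_j$ sparse makes the conditions at different $j$ essentially independent.
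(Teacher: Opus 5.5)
There is a genuine gap, and it sits exactly where you place the ``main obstacle'': the proposal never supplies a mechanism that forces $\min_{n\le Q}\|n\xi\|\,\|n\zeta\|\ge c/Q$.

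Building $\zeta$ from Ostrowski digit constraints and then using the triangle inequality cannot work as stated. A reverse triangle inequality bounds $\|n\zeta\|$ from below only when one term $b_iq_i\zeta$ dominates all the others. Nothing in your sketch ensures this for every $n\le Q$, and ruling out cancellation among these terms is the whole problem. (Also, for $\xi\in\Bad_1$ the size of $\|n\xi\|$ is governed by the \emph{lowest} nonzero Ostrowski digit of $n$, not the leading block; for instance $\|(q_j+1)\xi\|\approx\|\xi\|$.)

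The missing idea is an algebraic twist. Take $p/q$ with $|\xi-p/q|\le Kq^{-2}$, choose $b\in M_A(q)$, and put $\zeta$ within $Kq^{-2}$ of $r/q$ with $r\equiv pb\pmod q$. Writing $m\equiv np\pmod q$, one gets $\|np/q\|\,\|nr/q\|=\|m/q\|\,\|mb/q\|\ge 1/((A+2)q)$ for $1\le n<q$, simply because $b/q$ has partial quotients at most $A$. This bound survives the $O(q^{-2})$ perturbations for $n\le\lambda q$, so infinitely many such $q$ put $(\xi,\zeta)$ in $\mathcal U$ (Lemmas~\ref{lem:finite-pair} and~\ref{lem:limit-pair}).

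You also ask for a large fibre over \emph{every} $\xi\in F_A^\infty$. That is much stronger than anything the paper proves: Theorem~\ref{thm:first-dim} gives only a full-dimensional set of such $\xi$. To steer $pb\bmod q$ into a prescribed parent interval, one needs many multipliers $b$ at the denominators of $\xi$, and a numerator $p$ outside the exceptional set of Corollary~\ref{lem:interval-hitting}. Nothing guarantees either for an arbitrary $\xi$. The paper therefore chooses $\xi$ and $\zeta$ jointly, keeping only cylinders whose denominators are rich, $|M_A(q)|\ge T^\beta$.

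Your dimension bookkeeping is heuristic and partly inconsistent. Keeping intervals of length $1/q_k$ with $q_k^{1/2}$-type multiplicity gives a fibre of dimension $1/2$, independent of $\delta_A$. In the paper the $\zeta$-intervals have length $\asymp q^{-2}$ and there are $T^\beta$ of them per $\xi$-cylinder, so the fibre exponent is $\beta/2<\delta-\tfrac12$. Nor is $(\sqrt{21}-1)/4$ a nonemptiness threshold: existence only needs $\delta_A>3/4$.

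In the paper the count runs as follows:
\begin{itemize}
\item Hensley's theorem gives $\asymp T^{2\delta}$ cylinders $p/q$ with $q\asymp T$.
\item The constraint $\beta<2\delta-1$ is forced because non-rich moduli can carry mass up to $T^{1+\beta}$.
\item Vinogradov's bilinear bound for composite $q$ places $pb\bmod q$ in the parent interval and controls local counts.
\item A two-regime mass distribution argument in the plane then yields exponent $s=\delta+\beta/2\uparrow 2\delta-\tfrac12$.
\end{itemize}
The threshold comes from the Frostman estimate at intermediate scales $T^{-1}\lesssim\rho\lesssim1$, where the trivial count must be balanced against the bilinear error term. This requires $\beta>-1+\sqrt{5-2\delta}$, which is compatible with $\beta<2\delta-1$ exactly when $4\delta^2+2\delta-5>0$.

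A fibre-by-fibre argument would in addition need control of $pM_A(q)$ in short intervals for individual $p$. Lemma~\ref{lem:bilinear-count} does not give this directly; the paper's local count works because it also averages over all $p/q$ in a short interval.
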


In particular, Theorem~\ref{thm:pair-dim} settles the ``at least one coordinate
in $\Bad_1$'' alternative of \cite[Problem~2(i)]{SchleischitzULC}.

\begin{corollary}\label{cor:main}
There exist real numbers $\xi,\zeta$ such that $\xi\in\Bad_1$ and
\begin{equation}\label{eq:mainlimsup}
 \limsup_{Q\to\infty} Q\min_{1\le n\le Q}\|n\xi\|\,\|n\zeta\|>0.
\end{equation}
Equivalently,
$$
 \mathcal U\cap(\Bad_1\times\R)\ne\varnothing.
$$
\end{corollary}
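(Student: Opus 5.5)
The corollary follows from Theorem~\ref{thm:pair-dim}, and the plan is to extract it directly from there. First we fix an $A\ge2$ with $\delta_A>(\sqrt{21}-1)/4\approx 0.8956$. Such an $A$ exists because Hensley's estimates give $\delta_A\to1$ as $A\to\infty$. Theorem~\ref{thm:pair-dim} then gives
$$
\dimH\bigl(\mathcal U\cap(F_A\times\R)\bigr)\ge 2\delta_A-\tfrac12>0,
$$
so in particular $\mathcal U\cap(F_A\times\R)\neq\varnothing$.

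The one point to check is that the first coordinate can be taken in $\Bad_1$ rather than merely in $F_A$. The set $F_A$ also contains finite expansions, i.e.\ rationals, and these are not badly approximable. The fix is to split
$$
F_A\times\R=(F_A^\infty\times\R)\cup\bigl((F_A\cap\mathbb Q)\times\R\bigr).
$$
The second piece is a countable union of lines, so its Hausdorff dimension is at most $1$. By countable stability of Hausdorff dimension,
$$
\dimH\bigl(\mathcal U\cap(F_A^\infty\times\R)\bigr)\ge 2\delta_A-\tfrac12>1.
$$
The inequality $2\delta_A-\tfrac12>1$ holds because $\delta_A>0.8956>3/4$. Hence $\mathcal U\cap(F_A^\infty\times\R)$ is nonempty.

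Alternatively, one can argue directly. If $\xi=p/q$ is rational, then taking $n=q$ gives $\|n\xi\|=0$, so $\min_{1\le n\le Q}\|n\xi\|\,\|n\zeta\|=0$ for every $Q\ge q$. Thus $(p/q,\zeta)\notin\mathcal U$ for every $\zeta$, and $\mathcal U$ contains no points with rational first coordinate. Either way, any point $(\xi,\zeta)\in\mathcal U\cap(F_A\times\R)$ has $\xi\in F_A^\infty$. Such a $\xi$ has partial quotients bounded by $A$, so $\xi\in\Bad_1$, and the pair satisfies \eqref{eq:mainlimsup}.

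There is no real obstacle here. The only care needed is to exclude rational first coordinates, and the direct argument above does this without any use of dimension.
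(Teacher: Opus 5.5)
Your proposal is correct and is essentially the paper's proof: choose $A$ with $\delta_A>(\sqrt{21}-1)/4$, use Theorem~\ref{thm:pair-dim} to get $\mathcal U\cap(F_A\times\R)\ne\varnothing$, and exclude rational first coordinates because $n=v$ gives $\|n\xi\|=0$ for $\xi=u/v$. Your countable-stability alternative is also valid since $2\delta_A-\tfrac12>1$, but the paper uses the direct argument you give second.
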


We separately prove that, in fact, the set of badly approximable first coordinates $\xi$, for which we can find $\zeta$ such that $(\xi,\zeta)$ is a counterexample to ULC is of full dimension (that is, of dimension $1$).

Define
$$
 \cP_A=\{\xi\in F_A:\exists\zeta\in\R\text{ with }(\xi,\zeta)\in \mathcal U\},
$$
and
$$
 \cP=\{\xi\in\Bad_1:\exists\zeta\in\R\text{ with }(\xi,\zeta)\in \mathcal U\}.
$$

\begin{theorem}\label{thm:first-dim}
For every $A\ge2$ with $\delta_A>3/4$,
$$
 \dimH\cP_A=\delta_A.
$$
Consequently $\dimH\cP=1$.  More precisely, for every nonempty bounded open
interval $U\subset\R$,
$$
 \dimH(\cP\cap U)=1.
$$
\end{theorem}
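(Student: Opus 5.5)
The upper bound $\dimH\cP_A\le\dimH F_A=\delta_A$ is immediate, so the work is the lower bound. The plan is to build, inside $F_A^\infty$, a Cantor set $K$ of numbers $\xi$ whose expansions are free except at a sparse set of rapidly growing positions. At those positions we insert prescribed \emph{markers}: fixed words, or a constrained digit pattern of controlled length. Then, for each $\xi\in K$, we construct $\zeta=\zeta(\xi)$ by a nested-interval argument driven by the marker positions.

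\textbf{The construction of $\zeta$.} Fix a sparse sequence of indices $k_1<k_2<\cdots$ with $k_{j+1}/k_j\to\infty$, and let $q_k$ be the continuants of $\xi$. I would use the mechanism behind Theorem~\ref{thm:pair-dim}, or Moshchevitin's elementary argument. At scale $Q_j\asymp q_{k_j}$ we need, for every $n\le Q_j$, the bound $\|n\xi\|\,\|n\zeta\|\ge c/Q_j$. Since $\xi\in\Bad_1$ gives $\|n\xi\|\gg 1/n$, the danger comes only from $n$ with $\|n\zeta\|$ small. We therefore choose $\zeta$ in a nested way:
\begin{itemize}
\item $\zeta$ stays at distance $\gg \min(1,\, n/(Q_j\|n\xi\|\,n))$ from the rationals $m/n$ for $n\le Q_j$;
\item in practice this means $\zeta$ avoids neighbourhoods of radius about $c\,/(Q_j n\|n\xi\|)$ around each $m/n$.
\end{itemize}
Using Ostrowski/three-distance structure of $\{n\xi\}$ for badly approximable $\xi$, the total measure of the excluded set at stage $j$ is $\ll c\sum_{n\le Q_j}1/(Q_j n\|n\xi\|)\cdot n\cdot(1/n)$. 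Since $\sum_{n\le Q} 1/(n\|n\xi\|)\asymp (\log Q)^2$ for $\xi\in F_A$, this sum is too large in general. This is where the markers enter. Choosing the digits of $\xi$ near position $k_j$ (for example a long block of $1$'s, or a specified word) makes $\|n\xi\|$ large for most $n\le Q_j$ and controls the sum. Excluded sets at later stages must be compatible with earlier intervals; a standard nested-interval argument with $Q_{j+1}\gg Q_j$ then gives a nonempty $\zeta$-set.

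\textbf{Dimension of $K$.} If the constrained positions have density zero in the expansion, the set $K$ has Hausdorff dimension $\delta_A$. I would prove this via a mass distribution on cylinders: use Hensley's estimates for $\sum q_n^{-2s}$ over words in $\{1,\dots,A\}^n$, giving total mass $\approx 1$ at $s<\delta_A$, and sum this block-wise. The bounded distortion of cylinder lengths for $F_A$ handles concatenation. The hypothesis $\delta_A>3/4$ presumably enters here: the marker blocks must occupy a positive proportion of length, of order $1-\delta_A$ relative to the free parts, for the excluded-set estimate to close. Writing the loss as something like $2(\delta_A-3/4)$ per block, one then lets the marker proportion tend to zero, or takes a limit over $A$. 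The consequence $\dimH\cP=1$ follows from $\delta_A\to1$.

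\textbf{Local version and main obstacle.} For an arbitrary interval $U$, first fix a finite initial word whose cylinder lies in $U$ (possibly enlarging $A$), then run the same construction inside that cylinder. Since the prefix does not affect dimension, this gives $\dimH(\cP\cap U)=1$.

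The main obstacle is the second step: showing that, with markers costing zero density, $\zeta$ can be chosen uniformly at every scale $Q_j$. Concretely, the exclusion measure at stage $j$, intersected with the stage-$(j-1)$ interval of length about $Q_{j-1}^{-1}$, must remain a proper fraction of it. I would first try taking $\zeta$ with its own Ostrowski expansion relative to $\xi$, with digits forced to be about half the maximal value at the marker positions. I would then verify the product lower bound directly via the Ostrowski formula for $\|n\zeta\|$, rather than by measure counting.
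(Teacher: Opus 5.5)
Your upper bound is fine, and so is reducing the local statement to a prefix cylinder (you also need an integer translation, since $U$ need not meet $(0,1)$; this is harmless because $\|n(\xi+m)\|=\|n\xi\|$). The lower bound, however, has a genuine gap. The construction of $\zeta$ is the whole content of the theorem, you leave it as the ``main obstacle'', and the mechanism you propose for it cannot work.

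Your exclusion estimate contains a slip. The correct union bound at scale $Q$ is $\sum_{n\le Q} n\cdot\frac{2c}{nQ\|n\xi\|}=\frac{2c}{Q}\sum_{n\le Q}\|n\xi\|^{-1}$. For \emph{every} $\xi\in F_A^\infty$, whatever its digits, this is $\gg_A c\log Q$. Indeed, by the three-distance theorem the points $\{n\xi\}$, $0\le n\le Q$, leave no gap longer than $C_A/Q$. Hence each arc $[jC_A/Q,(j+1)C_A/Q)$ with $1\le j<Q/(2C_A)$ contains some $\{n\xi\}$ with $\|n\xi\|\le (j+1)C_A/Q$. So no choice of markers makes $\|n\xi\|$ large for most $n\le Q_j$.

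Restricting to your stage-$(j-1)$ interval $I$ does not help either. Since $Q_j|I|\to\infty$, the same count again gives an excluded proportion $\gg_A c\log Q_j$. A first-moment argument can therefore only reach $\min_{n\le Q}\|n\xi\|\,\|n\zeta\|\gg 1/(Q\log Q)$, which falls short of what $\mathcal U$ requires by a factor $\log Q$.

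The Ostrowski fallback is only a suggestion. Nothing shows that prescribed digits force $\|n\xi\|\,\|n\zeta\|\gg 1/Q_j$ for all $n\le Q_j$, and that expansion is adapted to $\|n\xi-\zeta\|$ rather than $\|n\zeta\|$. Your account of where $\delta_A>3/4$ enters (marker blocks of positive proportion that are later sent to zero) is internally inconsistent and matches no actual constraint.

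The missing idea is arithmetic, not metric. Instead of making $\zeta$ avoid bad rationals, pin both coordinates to rationals with a common denominator. Suppose $|\xi-p/q|,|\zeta-r/q|\le K_A/q^2$ with $r\equiv pb\pmod q$ and $b\in M_A(q)$. For $1\le n<q$, writing $m\equiv np$, one gets $\|np/q\|\,\|nr/q\|=\|m/q\|\,\|mb/q\|\ge 1/((A+2)q)$, because $b/q$ has partial quotients at most $A$. This bound survives the perturbation for $n\le\lambda q$ (Lemmas~\ref{lem:finite-pair} and~\ref{lem:limit-pair}). So every stage of a nested construction with $q\asymp T_k$ automatically supplies the good scale $Q=\lfloor\lambda q\rfloor$, with no exclusion argument at all.

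What remains is compatibility with nesting. Given a parent cylinder $F_A(\mathbf u)$ and parent $\zeta$-interval $J$, most $p\in M_A(q;F_A(\mathbf u))$ must admit some $b\in M_A(q)$ with $pb\bmod q$ in the residue interval $H_q$ attached to $J$. No fixed marker word provides this. The paper gets it from Vinogradov's bilinear bound: Corollary~\ref{lem:interval-hitting} gives $\ll q^{1+\eps}/|M_A(q)|$ exceptional residues. It combines this with Hensley's count, which puts most of the mass on moduli with $|M_A(q)|\ge T^\beta$.

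There are two losses: $T^{1+\beta}$ from the remaining moduli and $T^{2+\eps-\beta}$ from exceptional $p$. Some $\beta$ makes both $o(T^{2\delta_A})$ exactly when $\delta_A>3/4$, and that is the origin of the hypothesis. A positive proportion of first-coordinate cylinders then survives at each scale. The mass distribution principle (Lemma~\ref{lem:mass-distribution}) gives dimension arbitrarily close to $\delta_A$.
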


The distinction from the classical Littlewood conjecture is substantial.  The
set of counterexamples to the classical Littlewood conjecture, if nonempty,
has Hausdorff dimension zero by a theorem of Einsiedler, Katok and
Lindenstrauss \cite{EinsiedlerKatokLindenstrauss}.  By contrast, the present
paper constructs positive-dimensional families of counterexamples to the
uniform statement, even with one badly approximable coordinate.

The interaction between Littlewood's conjecture and badly approximable
coordinates has a substantial literature.  Pollington and Velani proved that,
for every fixed $\xi\in\Bad_1$, there is a set $G(\xi)\subset\Bad_1$ of
Hausdorff dimension one such that
$$
 q\|q\xi\|\,\|q\zeta\|\le \frac{1}{\log q}
$$
for infinitely many $q$, for every $\zeta\in G(\xi)$
\cite{PollingtonVelaniLittlewood}.  In a related direction, Badziahin,
Pollington and Velani proved that finite intersections of weighted badly
approximable sets have full Hausdorff dimension
\cite{BadziahinPollingtonVelani}.  These results concern the classical and
weighted settings rather than the uniform problem studied here.

We also note that Schleischitz's construction draws in part on Bourgain and Kontorovich's work
toward Zaremba's conjecture \cite{BourgainKontorovich}.  By contrast, the
present argument uses Hensley's counting theorem for a fixed continued-fraction
alphabet and proves the required product-set estimate for arbitrary composite
moduli.

\section{Continued fractions with bounded partial quotients}\label{sec:continued-fractions}

For nonnegative quantities $X$ and $Y$, the notation $X\ll_A Y$ means
that $X\le C_A Y$ for some constant $C_A$ depending only on $A$, whereas $X\asymp_A Y$ means
that both $X\ll_A Y$ and $Y\ll_A X$ hold.  We write $|I|$ for the length of an interval and
$B(x,r)$ for the ball of radius $r$.

Every rational in $(0,1)$ has two finite continued-fraction expansions.  We
always use the canonical expansion, whose last digit is at least $2$.
Finite words of partial quotients are denoted in boldface.  Thus, for a word
$\mathbf{w}=(a_1,\ldots,a_s)$, define its convergents $p_j/q_j$ by
$$
 p_{-1}=1,\quad p_0=0,\qquad
 q_{-1}=0,\quad q_0=1,
$$
$$
 p_j=a_jp_{j-1}+p_{j-2},\qquad
 q_j=a_jq_{j-1}+q_{j-2}\qquad(1\le j\le s).
$$
Thus $[0;a_1,\ldots,a_j]=p_j/q_j$, and
$|p_jq_{j-1}-p_{j-1}q_j|=1$.  We write $q(\mathbf{w})=q_s$.  The inverse branch
associated to $\mathbf{w}$ is
\begin{equation}\label{eq:inverse-branch}
 \phi_{\mathbf{w}}(x)=\frac{p_s+p_{s-1}x}{q_s+q_{s-1}x}\qquad(0\le x\le1).
\end{equation}
The full continued-fraction interval with prefix $\mathbf{w}$ is
$\phi_{\mathbf{w}}([0,1])$.  If $\mathbf{u}$ and $\mathbf{v}$ are finite words, write $\mathbf{u}\mathbf{v}$ for their concatenation.  Then
\begin{equation}\label{eq:continuant-concatenation}
 q(\mathbf{u})q(\mathbf{v})\le q(\mathbf{u}\mathbf{v})\le 2q(\mathbf{u})q(\mathbf{v}).
\end{equation}

Fix $A\ge2$.  A word is called admissible if all its digits lie in
$\{1,\ldots,A\}$.  Let
$$
 F_A^\infty=\{[0;c_1,c_2,\ldots]:1\le c_j\le A\text{ for all }j\}
$$
and, taking the union with rationals with bounded partial quotients, let
$$
 F_A=F_A^\infty\cup
 \{[0;b_1,\ldots,b_s]:1\le b_j\le A,\ b_s\ge2\}.
$$
For an admissible word $\mathbf{w}$, let
$$
 F_A(\mathbf{w})=\{x\in F_A:\text{the canonical expansion of $x$ begins with }\mathbf{w}\}.
$$
The sets $F_A$ and $F_A(\mathbf{w})$ are known to be compact.  Also, in the notations above the empty word is allowed, with
$q(\varnothing)=1$ and $F_A(\varnothing)=F_A$.  If $p/q\in F_A$ is rational, its canonical word is
denoted by $\mathbf{w}(p/q)$.

For $q\ge1$, put
$$
 M_A(q)=\{1\le a<q:(a,q)=1,\ a/q\in F_A\},
 \qquad
 M_A(q;I)=\{a\in M_A(q):a/q\in I\}.
$$
Thus $a\in M_A(q;F_A(\mathbf{u}))$ precisely when the canonical word of $a/q$
begins with $\mathbf{u}$.

We use the following immediate corollary of Hensley's classical counting result.

\begin{theorem}[Hensley, \cite{Hensley1990}]\label{thm:hensley}
Let $A\ge2$ and put $\delta_A=\dimH F_A^\infty$.  Then
\begin{equation}\label{eq:hensley-dyadic-global}
 \sum_{T\le q<2T}|M_A(q)|\asymp_A T^{2\delta_A}
 \,\text{ as }T\to\infty.
\end{equation}
\end{theorem}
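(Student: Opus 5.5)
The plan is to derive \eqref{eq:hensley-dyadic-global} from Hensley's asymptotic for the cumulative count, by differencing over a dyadic block. In \cite{Hensley1990} Hensley proves that
$$
 N_A(X):=\#\{(a,q):1\le a<q\le X,\ (a,q)=1,\ a/q\text{ has all partial quotients}\le A\}
 = c_A X^{2\delta_A}\bigl(1+o(1)\bigr)
$$
as $X\to\infty$, with some constant $c_A>0$. The first step is to check that this count matches ours, so that $N_A(X)=\sum_{q\le X}|M_A(q)|$ exactly, or at least up to an additive error of $O_A(1)$.

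The only issue there is the convention for finite expansions. A rational in $(0,1)$ has exactly two expansions, the canonical one $[0;b_1,\ldots,b_s]$ with $b_s\ge2$ and the alternative one $[0;b_1,\ldots,b_s-1,1]$. All digits of the canonical word are at most $A$ if and only if all digits of the alternative word are, because $b_s-1<b_s\le A$ and $1\le A$. So the condition ``partial quotients bounded by $A$'' does not depend on which expansion Hensley uses. Hence it coincides with $a/q\in F_A$, which is exactly the condition defining $M_A(q)$. The denominators $q=1,2$, where the endpoints or the value $1/2=[0;2]$ appear, contribute only $O(1)$ and can be ignored.

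With this identification, I write
$$
 \sum_{T\le q<2T}|M_A(q)| = N_A(2T-1)-N_A(T-1).
$$
By Hensley's asymptotic this equals
$$
 c_A\bigl((2T)^{2\delta_A}-T^{2\delta_A}\bigr)+o\bigl(T^{2\delta_A}\bigr)
 = c_A\bigl(2^{2\delta_A}-1\bigr)T^{2\delta_A}+o\bigl(T^{2\delta_A}\bigr).
$$
Replacing $2T-1$ by $2T$ and $T-1$ by $T$ changes each count by at most a single denominator, which is also $o\bigl(T^{2\delta_A}\bigr)$. Since $\delta_A>0$, the factor $2^{2\delta_A}-1$ is strictly positive. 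Therefore both the upper bound and the lower bound in \eqref{eq:hensley-dyadic-global} hold for all $T$ larger than some $T_0(A)$, and enlarging the implied constants covers the finitely many smaller $T$.

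The only point needing care is that Hensley's result gives a genuine asymptotic and not just an upper and lower bound of the same order. With bounds $\asymp$ alone, differencing would not yield the lower bound for the dyadic block. If only the cruder statement were available, I would obtain the lower bound for the block directly instead. Concatenating admissible words and using \eqref{eq:continuant-concatenation}, each denominator in $[T,2T)$ is hit, up to bounded multiplicity, from the words with continuant in a fixed range. Combined with the convergence of $\sum_{\mathbf w}q(\mathbf w)^{-2s}$ exactly for $s>\delta_A$, this gives the $T^{2\delta_A}$ growth of the block.
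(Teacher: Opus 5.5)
Your core argument, differencing Hensley's cumulative asymptotic over the block, is the paper's argument. The paper differences at $2T$ and $T$ and then absorbs the two boundary denominators as an $O(T)$ error, using $\delta_A\ge\delta_2>1/2$. Your remark that replacing $2T-1$ by $2T$ (and $T-1$ by $T$) costs only $o(T^{2\delta_A})$ is true, but you need to give a reason. One reason is $|M_A(q)|<q$ together with $\delta_A>1/2$. The other is simply $(2T-1)^{2\delta_A}\sim(2T)^{2\delta_A}$, which makes the replacement unnecessary. For non-integer $T$, difference at $\lceil 2T\rceil-1$ and $\lceil T\rceil-1$.

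The one real error is in your convention check. You claim the canonical word has all digits $\le A$ if and only if the alternative word does. This is false: the inequality $b_s-1<b_s\le A$ gives only one direction. The alternative word $(b_1,\ldots,b_{s-1},b_s-1,1)$ is admissible as soon as $b_s\le A+1$. For $A=2$, $1/3=[0;2,1]$ has an expansion with digits at most $2$, but its canonical expansion is $[0;3]$, so $1/3\notin F_2$.

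The discrepancy consists exactly of the rationals with canonical word $(b_1,\ldots,b_{s-1},A+1)$, $b_j\le A$, and it is not $O_A(1)$. The map $\mathbf{v}\mapsto\mathbf{v}(A+1)$ sends the canonical admissible words with $q(\mathbf{v})\le X/(A+2)$ injectively into it, so it has $\gg_A X^{2\delta_A}$ elements up to height $X$. Suppose Hensley's count is read as ``some expansion has all digits $\le A$''. Then your $N_A(X)$ exceeds $\sum_{q\le X}|M_A(q)|$ by a positive proportion, and differencing it yields the block asymptotic for the wrong quantity.

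Your identification is valid only under the canonical reading (equivalently, ``every expansion has digits $\le A$''). That is how the paper uses Hensley's theorem: it applies it directly to $\sum_{q\le X}|M_A(q)|$. You must either cite the theorem in that form or prove the canonical asymptotic separately. Comparing with the larger count gives, without further work, only cumulative bounds, which (as you note) do not difference to a lower bound on a ratio-$2$ block. Your fallback does not close this gap either: convergence of $\sum_{\mathbf{w}}q(\mathbf{w})^{-2s}$ for $s>\delta_A$ determines the growth exponent, not a lower bound $\gg T^{2\delta_A}$ on every dyadic block.
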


Hensley proves the asymptotic
$$
 \sum_{q\le X}|M_A(q)|\sim \kappa_A X^{2\delta_A}
 \,\text{ as }X\to\infty
$$
for a constant $\kappa_A>0$. Subtracting the asymptotics at $2T$ and $T$
gives the corresponding estimate for $T<q\le2T$.  Replacing this interval by
$T\le q<2T$ changes the count by $O(T)$, which is negligible because
$\delta_A\ge\delta_2>1/2$, see \cite{Hensley1989}.  This proves
\eqref{eq:hensley-dyadic-global}.

Let $N_r(E)$ be the least number of intervals of length $r$ needed to cover $E$. From Theorem~\ref{thm:hensley} we can deduce the following covering estimate. It is standard, but we include the proof nevertheless.

\begin{proposition}\label{prop:FA-global-cover}
Uniformly for $0<r\le1$,
\begin{equation}\label{eq:FA-global-cover}
 N_r(F_A^\infty)\ll_A r^{-\delta_A}.
\end{equation}
\end{proposition}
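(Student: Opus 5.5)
We cover $F_A^\infty$ by the fundamental cylinders $\phi_{\mathbf w}([0,1])$ of admissible words whose continuants lie in a fixed dyadic range. Theorem~\ref{thm:hensley} then bounds the number of such cylinders.

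\textbf{Step 1: reduce to small $r$.} Given $0<r\le1$, we may assume $r$ is small, since for $r$ bounded below the bound is trivial: $F_A^\infty\subset[0,1]$ needs at most $\lceil 1/r\rceil$ intervals. Choose $T\ge1$ with $T^2\asymp r^{-1}$.

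\textbf{Step 2: the stopping-time cover.} For each $x=[0;c_1,c_2,\ldots]\in F_A^\infty$, let $s$ be the least index with $q_s\ge T$, and let $\mathbf w=(c_1,\ldots,c_s)$. Since $q_s=c_sq_{s-1}+q_{s-2}\le (A+1)q_{s-1}<(A+1)T$, we have $T\le q(\mathbf w)<(A+1)T$. The cylinder $\phi_{\mathbf w}([0,1])$ has length $1/(q_s(q_s+q_{s-1}))\le q_s^{-2}\le T^{-2}\ll r$. Hence each such cylinder is covered by $O(1)$ intervals of length $r$. The words $\mathbf w$ obtained this way cover $F_A^\infty$, so $N_r(F_A^\infty)$ is at most a constant times the number $W$ of admissible words $\mathbf w$ with $T\le q(\mathbf w)<(A+1)T$.

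\textbf{Step 3: count words via Hensley.} Each admissible word $\mathbf w$ determines the rational $p_s/q_s=[0;\mathbf w]$. This map is at most $2$-to-$1$ into reduced fractions $a/q$ with $a/q\in F_A$. If the last digit is $1$, the canonical expansion merges it into the previous digit, which may reach $A+1$. To avoid this, map $\mathbf w$ instead to $[0;\mathbf w,2]$. Its canonical word is admissible, its denominator lies in $[T,3(A+1)T)$ by \eqref{eq:continuant-concatenation}, and the map is injective. Therefore
$$
W\le\sum_{T\le q<3(A+1)T}|M_A(q)|,
$$
where the range splits into $O_A(1)$ dyadic blocks. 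Theorem~\ref{thm:hensley} bounds each block by $\ll_A T^{2\delta_A}$. Thus $W\ll_A T^{2\delta_A}\asymp r^{-\delta_A}$, which proves \eqref{eq:FA-global-cover}.

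\textbf{Main obstacle.} The only delicate point is the bookkeeping in Step 3. One must ensure that the rationals attached to the words are in canonical form with bounded digits, which is why we append the digit $2$, and that the map from words to rationals is injective, or at least boundedly many-to-one. Theorem~\ref{thm:hensley} applies only for $T$ large. For $T$ bounded, i.e.\ $r$ bounded below, the trivial bound from Step 1 absorbs the constants.
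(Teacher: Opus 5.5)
Your proposal is correct and follows the paper's proof essentially step for step: you use stopping-time words with $q(\mathbf w^-)<T\le q(\mathbf w)$ and $T\asymp r^{-1/2}$, bound cylinder lengths by $q(\mathbf w)^{-2}$, and send $\mathbf w\mapsto[0;\mathbf w,2]$ injectively into canonical bounded-digit rationals, which you count with Theorem~\ref{thm:hensley} over $O_A(1)$ dyadic blocks. (One harmless slip: the bound $q(\mathbf w2)<3(A+1)T$ comes from the recurrence $q(\mathbf w2)=2q(\mathbf w)+q(\mathbf w^-)\le 3q(\mathbf w)$, not from \eqref{eq:continuant-concatenation}, which only gives $4(A+1)T$.)
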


\begin{proof}
For $r$ bounded away from zero the assertion is immediate, so assume that
$r$ is sufficiently small and put $R=r^{-1/2}$.  Let $\mathcal S_r$ be the
set of admissible words $\mathbf{w}$ such that
\begin{equation}\label{eq:stopping-denominator}
 q(\mathbf{w}^{-})<R\le q(\mathbf{w}),
\end{equation}
where $\mathbf{w}^{-}$ is obtained by deleting the last digit.  Every infinite
bounded-digit expansion has a unique prefix in $\mathcal S_r$.  Moreover, the
continuant recurrence gives
\begin{equation}\label{eq:stopping-denominator-size}
 R\le q(\mathbf{w})< (A+1)R
 \qquad(\mathbf{w}\in\mathcal S_r).
\end{equation}

The full continued-fraction interval $I(\mathbf{w})=\phi_{\mathbf{w}}([0,1])$ has length
\begin{equation}\label{eq:full-cylinder-length}
 |I(\mathbf{w})|=\frac1{q(\mathbf{w})(q(\mathbf{w})+q(\mathbf{w}^{-}))}\le q(\mathbf{w})^{-2}\le r.
\end{equation}
Since the intervals $I(\mathbf{w})$, $\mathbf{w}\in\mathcal S_r$, cover $F_A^\infty$, it remains
to bound the number of elements in $\mathcal S_r$.

Append the digit $2$ to each $\mathbf{w}\in\mathcal S_r$.  The resulting word $\mathbf{w}(2)$ is
canonical, different words give different rationals, and
$$
 2R\le q(\mathbf{w}(2))=2q(\mathbf{w})+q(\mathbf{w}^{-})<(2A+3)R.
$$
The interval $[2R,(2A+3)R)$ is covered by $O_A(1)$ dyadic intervals.
The upper bound in Theorem~\ref{thm:hensley}, with the implicit constant
enlarged to include bounded values of $R$, therefore gives
$$
 \#\mathcal S_r\ll_A R^{2\delta_A}=r^{-\delta_A}.
$$
This bound together with \eqref{eq:full-cylinder-length} proves
\eqref{eq:FA-global-cover}.
\end{proof}

The preceding proposition has the following local corollary, whose proof we include.

\begin{lemma}\label{lem:FA-regularity}
Uniformly for every bounded interval $W\subset\R$ and every $0<r\le1$,
\begin{equation}\label{eq:FA-cover}
 N_r(F_A^\infty\cap W)
 \ll_A \left(1+\frac{|W|}{r}\right)^{\delta_A}.
\end{equation}
\end{lemma}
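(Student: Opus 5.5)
Fix a bounded interval $W$ and $0<r\le1$. The plan is to split according to whether $|W|$ is small or large compared with $r$, and to use the dynamical self-similarity of continued-fraction cylinders to reduce the large case to Proposition~\ref{prop:FA-global-cover}.

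\emph{Easy cases.} If $|W|\le r$, then $W$ is covered by a single interval of length $r$, so $N_r(F_A^\infty\cap W)\le1$. If $|W|\ge c_A$ for a constant $c_A>0$ depending only on $A$, then Proposition~\ref{prop:FA-global-cover} gives $N_r(F_A^\infty\cap W)\le N_r(F_A^\infty)\ll_A r^{-\delta_A}\ll_A (|W|/r)^{\delta_A}$. So we may assume $r<|W|<c_A$.

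\emph{Decomposition into cylinders.} Stop every infinite admissible expansion at the first prefix $\mathbf{w}$ with $|I(\mathbf{w})|\le|W|$. The continuant recurrence and \eqref{eq:full-cylinder-length} show that this prefix satisfies $|I(\mathbf{w})|\asymp_A|W|$. These cylinders are disjoint in their interiors, and each has length $\gg_A|W|$. Hence only $O_A(1)$ of them meet $W$. It therefore suffices to bound $N_r(F_A^\infty\cap I(\mathbf{w}))$ for a single such $\mathbf{w}$.

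\emph{Transfer to the global estimate.} We have $F_A^\infty\cap I(\mathbf{w})=\phi_{\mathbf{w}}(F_A^\infty)$. By \eqref{eq:inverse-branch},
$$
\phi_{\mathbf{w}}'(x)=\pm(q_s+q_{s-1}x)^{-2},
$$
and for $x\in[0,1]$ this lies between $(2q_s)^{-2}$ and $q_s^{-2}$. This is bounded distortion with constant $4$, so $|\phi_{\mathbf{w}}'|\asymp|I(\mathbf{w})|\asymp_A|W|$ on $[0,1]$. Put $r'=\min(1,r/(C_A|W|))$ with $C_A$ large. Any interval $J\subset[0,1]$ of length $r'$ is then mapped by $\phi_{\mathbf{w}}$ into an interval of length at most $r$. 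Applying Proposition~\ref{prop:FA-global-cover} at scale $r'$ gives
$$
N_r(\phi_{\mathbf{w}}(F_A^\infty))\le N_{r'}(F_A^\infty)\ll_A (r')^{-\delta_A}\ll_A(|W|/r)^{\delta_A}.
$$
Summing over the $O_A(1)$ cylinders and combining with the easy cases yields \eqref{eq:FA-cover}.

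\emph{Main obstacle.} The only delicate step is checking that the stopping cylinders have length comparable to $|W|$ uniformly in $A$-dependent constants. Passing from a parent cylinder to a child shrinks the length by at most a factor $\asymp (A+1)^2$, which gives the lower bound $|I(\mathbf{w})|\gg_A|W|$, and hence the bounded overlap count.
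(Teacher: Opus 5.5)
Your proposal is correct and follows essentially the same route as the paper: dispose of the trivial cases $|W|\le r$ and $|W|\gg_A 1$, take a stopping-time family of cylinders of length $\asymp_A|W|$ of which only $O_A(1)$ meet $W$, and transfer through $\phi_{\mathbf w}$ (using $|\phi_{\mathbf w}'|\le q(\mathbf w)^{-2}\ll|W|$) to Proposition~\ref{prop:FA-global-cover} at scale $\asymp r/|W|$. The only cosmetic difference is the stopping rule: you stop at the first prefix with $|I(\mathbf w)|\le|W|$, whereas the paper stops by denominator, $q(\mathbf w^-)<|W|^{-1/2}\le q(\mathbf w)$; the two rules are equivalent by \eqref{eq:full-cylinder-length}.
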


\begin{proof}
Write $L=|W|$.  If $L\le r$, then one interval of length $r$ covers the intersection.  If
$L\ge1$, Proposition~\ref{prop:FA-global-cover} gives
$$
 N_r(F_A^\infty\cap W)\le N_r(F_A^\infty)
 \ll_A r^{-\delta_A}
 \le (L/r)^{\delta_A}.
$$
It remains to consider $r<L<1$.

Define $\mathcal S_L$ to be the set of admissible words $\mathbf w$ such that
$$
 q(\mathbf w^- )<L^{-1/2}\le q(\mathbf w).
$$
Then
$$
 L^{-1/2}\le q(\mathbf{w})<(A+1)L^{-1/2}
 \qquad(\mathbf{w}\in\mathcal S_L).
$$
The intervals $I(\mathbf{w})$, $\mathbf{w}\in\mathcal S_L$, have disjoint interiors, and
\eqref{eq:full-cylinder-length} gives
\begin{equation}\label{eq:stopping-interval-size}
 \frac{L}{2(A+1)^2}\le |I(\mathbf{w})|\le L.
\end{equation}
Every interval $I(\mathbf{w})$ that meets $W$ is contained in the $L$-neighbourhood of
$W$.  Since their interiors are disjoint, \eqref{eq:stopping-interval-size}
shows that only $O_A(1)$ members of $\mathcal S_L$ can meet $W$.

For each such $\mathbf{w}$, one has
$F_A^\infty\cap F_A(\mathbf{w})=\phi_{\mathbf{w}}(F_A^\infty)$.  The determinant identity for convergents
and \eqref{eq:inverse-branch} give
$$
 |\phi_{\mathbf{w}}'(x)|=(q(\mathbf{w})+q(\mathbf{w}^{-})x)^{-2}\le q(\mathbf{w})^{-2}\le L
 \qquad(0\le x\le1).
$$
Thus the image under $\phi_{\mathbf{w}}$ of an interval of length $r/L$ has length at
most $r$.  Proposition~\ref{prop:FA-global-cover} therefore implies
$$
 N_r(F_A^\infty\cap F_A(\mathbf{w}))
 \le N_{r/L}(F_A^\infty)
 \ll_A (L/r)^{\delta_A}.
$$
Summing over the $O_A(1)$ intervals that meet $W$ proves
\eqref{eq:FA-cover}.
\end{proof}

We shall also use the following two elementary continued-fraction facts, which
can be deduced from Khinchin's book \cite{KhinchinCF}.  Let $p_j/q_j$ be the
convergents of a finite or infinite continued fraction of $\xi$, and let $n$ be
a positive integer smaller than the final denominator in the finite case.  If
$j$ is the largest index for which $q_j\le n$, then $q_j\le n<q_{j+1}$ and
\begin{equation}\label{eq:best-approx-facts}
 \|n\xi\|\ge\|q_j\xi\|,
 \qquad
 \|q_j\xi\|\ge\frac1{q_j+q_{j+1}}.
\end{equation}

\begin{lemma}\label{lem:finite-badness}
For every $A\ge2$, uniformly in $a\in M_A(q)$,
\begin{equation}\label{eq:finitebad}
 n\left\|\frac{na}{q}\right\|\ge\frac1{A+2}
 \qquad(1\le n<q).
\end{equation}
\end{lemma}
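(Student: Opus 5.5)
The plan is to apply the best-approximation facts \eqref{eq:best-approx-facts} to the finite continued fraction $a/q=[0;b_1,\ldots,b_s]$, which is the canonical expansion. Because $a\in M_A(q)$ we have $1\le b_j\le A$ for every $j$, $(a,q)=1$, and the final denominator $q_s$ equals $q$. Put $\xi=a/q$, so that $\|n\xi\|=\|na/q\|$. Let $p_j/q_j$ be its convergents, $0\le j\le s$; note $q_0=1$. Given $1\le n<q=q_s$, choose $j$ largest with $q_j\le n$. Then $0\le j\le s-1$ and $q_j\le n<q_{j+1}$.

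By \eqref{eq:best-approx-facts},
$$
 n\|n\xi\|\ge n\|q_j\xi\|\ge \frac{n}{q_j+q_{j+1}}\ge\frac{q_j}{q_j+q_{j+1}}.
$$
The recurrence gives $q_{j+1}=b_{j+1}q_j+q_{j-1}\le (A+1)q_j$, using $q_{j-1}\le q_j$ together with $b_{j+1}\le A$. For $j=0$ this also holds, since $q_{-1}=0$ and $q_1=b_1\le A$. Hence $q_j+q_{j+1}\le (A+2)q_j$, and therefore $n\|na/q\|\ge 1/(A+2)$.

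The one step that needs care is checking that \eqref{eq:best-approx-facts} really applies in the finite case. The index $j+1\le s$ must be a genuine convergent, which holds because $n<q_s$. We also need $\|q_j\xi\|\ge 1/(q_j+q_{j+1})$ even when $j+1=s$. In that case $\|q_{s-1}\cdot p_s/q_s\|=1/q_s$, because $|p_sq_{s-1}-p_{s-1}q_s|=1$ and $q_s\ge2$, so the bound holds trivially. For $j+1<s$ the standard estimate $|q_j\xi-p_j|\ge 1/(q_j+q_{j+1})$ applies as usual.

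The edge case $q=1$ is vacuous, since then $M_A(1)=\varnothing$. No other step should present difficulty: everything follows from the continuant recurrence and the fact that all digits, including the last one, are at most $A$.
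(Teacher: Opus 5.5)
Your proof is correct and follows the paper's route: take $j$ maximal with $q_j\le n<q_{j+1}$, apply the best-approximation facts \eqref{eq:best-approx-facts}, and use $q_{j+1}\le (A+1)q_j$ to get the bound $1/(A+2)$. Your separate check of the finite-expansion case $j+1=s$, where $\|q_{s-1}a/q\|=1/q$, is a detail the paper leaves implicit when it cites the standard continued-fraction facts.
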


\begin{proof}
Let $p_j/q_j$ be the convergents of $a/q$, and let $j$ be the largest
index for which $q_j\le n$.  Then $q_j\le n<q_{j+1}$.  Since all partial quotients of $a/q$ are at most $A$, we have
$q_{j+1}\le(A+1)q_j$.  Hence \eqref{eq:best-approx-facts} gives
$$
 n\left\|\frac{na}{q}\right\|
 \ge q_j\left\|q_j\frac aq\right\|
 \ge\frac{q_j}{q_j+q_{j+1}}
 \ge\frac1{A+2}.
$$
\end{proof}

\begin{lemma}\label{lem:cylapprox}
For every $A\ge2$ and every admissible word $\mathbf{w}$,
\begin{equation}\label{eq:cyldiam}
 \diam F_A(\mathbf{w})\asymp_A q(\mathbf{w})^{-2}.
\end{equation}
If $\mathbf{w}=\mathbf{w}(p/q)$ is the canonical word of $p/q\in F_A$, then
\begin{equation}\label{eq:cylapprox}
 \left|\xi-\frac pq\right|\ll_A q^{-2}
 \qquad(\xi\in F_A(\mathbf{w})).
\end{equation}
Finally, if $\mathbf{w}$ begins with $\mathbf{u}$, then $F_A(\mathbf{w})\subseteq F_A(\mathbf{u})$.
\end{lemma}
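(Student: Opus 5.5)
The plan is to identify $F_A(\mathbf{w})$ with the image of $F_A$ under the inverse branch $\phi_{\mathbf{w}}$, and to read off all three assertions from the explicit Möbius form \eqref{eq:inverse-branch}. Write $\mathbf{w}=(a_1,\ldots,a_s)$ with convergents $p_j/q_j$. A real $x\in(0,1)$ has canonical expansion beginning with $\mathbf{w}$ exactly when $x=\phi_{\mathbf{w}}(y)$, where either $y=0$ (then $x=p_s/q_s$, which needs $a_s\ge2$ if we insist on canonicity), or $y\in(0,1)$ is the tail $[0;a_{s+1},\ldots]$. One boundary case must be watched: if $y=[0;1]=1$ the expansion ends in $1$ and is not canonical. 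So, up to these finitely many endpoint rationals, $F_A(\mathbf{w})=\phi_{\mathbf{w}}(F_A)$, and it is always sandwiched as
$$
 \phi_{\mathbf{w}}(F_A^\infty)\subseteq F_A(\mathbf{w})\subseteq\phi_{\mathbf{w}}([0,1]).
$$

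For the upper bound in \eqref{eq:cyldiam}, I use \eqref{eq:full-cylinder-length}. It gives $\diam F_A(\mathbf{w})\le|\phi_{\mathbf{w}}([0,1])|\le q(\mathbf{w})^{-2}$.

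For the lower bound, I exhibit two points of $F_A^\infty$ that are a fixed distance apart, say $y_1=[0;\overline{1}]$ and $y_2=[0;\overline{A}]$. These are distinct and lie in $F_A^\infty$, so $c_A=|y_1-y_2|>0$. Their images lie in $F_A(\mathbf{w})$. By the mean value theorem, using $|\phi_{\mathbf{w}}'(x)|=(q_s+q_{s-1}x)^{-2}\ge(2q_s)^{-2}$ on $[0,1]$, the images are at distance at least $c_A/(4q_s^2)$. This gives $\diam F_A(\mathbf{w})\gg_A q(\mathbf{w})^{-2}$. The empty word is trivial.

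For \eqref{eq:cylapprox}, take $\mathbf{w}=\mathbf{w}(p/q)$. By construction $p/q=\phi_{\mathbf{w}}(0)\in\phi_{\mathbf{w}}([0,1])$, and every $\xi\in F_A(\mathbf{w})$ lies in the same interval. Hence
$$
 |\xi-p/q|\le|\phi_{\mathbf{w}}([0,1])|\le q^{-2}.
$$
Here $q=q(\mathbf{w})$ because $\gcd(p,q)=1$ and the continuant of the canonical word is the reduced denominator.

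The inclusion $F_A(\mathbf{w})\subseteq F_A(\mathbf{u})$ is immediate from the definition: if the canonical expansion of $x$ begins with $\mathbf{w}$ and $\mathbf{w}$ begins with $\mathbf{u}$, then it begins with $\mathbf{u}$.

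The only delicate point is the bookkeeping for canonical versus non-canonical expansions of rationals, that is, whether $\phi_{\mathbf{w}}(0)$ and $\phi_{\mathbf{w}}(1)$ belong to $F_A(\mathbf{w})$. The argument avoids this entirely by using the sandwich above, since the lower bound only uses irrational points and the upper bounds only use the containing interval.
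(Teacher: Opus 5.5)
Your proof is correct and follows essentially the same route as the paper. Both arguments place $F_A(\mathbf{w})$ between $\phi_{\mathbf{w}}(F_A^\infty)$ and the full interval $\phi_{\mathbf{w}}([0,1])$, take the upper bounds (for the diameter and, via $p/q=\phi_{\mathbf{w}}(0)$, for $|\xi-p/q|$) from that containing interval, and get the lower bound by applying the mean value theorem with $|\phi_{\mathbf{w}}'|\asymp q(\mathbf{w})^{-2}$ to two fixed points of $F_A^\infty$; your choice of $[0;\overline{1}]$ and $[0;\overline{A}]$ and the explicit bound $|\phi_{\mathbf{w}}'|\ge(2q_s)^{-2}$ just make the paper's constants concrete.
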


\begin{proof}
By the determinant identity for convergents, \eqref{eq:inverse-branch} satisfies
$$
 |\phi_{\mathbf{w}}'(x)|=(q_s+q_{s-1}x)^{-2}\asymp q(\mathbf{w})^{-2}
 \qquad(0\le x\le1).
$$
The upper bound in \eqref{eq:cyldiam} follows because $F_A(\mathbf{w})$ lies in the
full interval $\phi_{\mathbf{w}}([0,1])$.  For the lower bound, fix two distinct points
$x_0,x_1\in F_A^\infty$ and apply the mean value theorem to
$\phi_{\mathbf{w}}(x_0)-\phi_{\mathbf{w}}(x_1)$.  If $\mathbf{w}=\mathbf{w}(p/q)$, then $p/q=\phi_{\mathbf{w}}(0)$, so the same
derivative estimate proves \eqref{eq:cylapprox}.  Finally, if $\mathbf{w}=\mathbf{u}\mathbf{v}$, then every expansion beginning with $\mathbf{w}$ also begins
with $\mathbf{u}$, which proves the nesting assertion.
\end{proof}

For the rest of the paper, fix $K_A\ge1$ such that
\begin{equation}\label{eq:KA-choice}
 \diam F_A(\mathbf{w})\le K_Aq(\mathbf{w})^{-2},
 \qquad
 \left|\xi-\frac pq\right|\le K_Aq^{-2}
\end{equation}
whenever $\mathbf{w}$ is admissible, $p/q\in F_A$ has word $\mathbf{w}$, and
$\xi\in F_A(\mathbf{w})$.

\begin{lemma}\label{lem:local-rational-upper}
Let $A\ge2$, let $\mathbf{u}$ be an admissible word, and let $U\subset\R$ be a bounded
interval.  Uniformly in $\mathbf{u}$, $U$, and $T\ge2$,
\begin{equation}\label{eq:hensley-local-upper}
 \sum_{T\le q<6T}|M_A(q;F_A(\mathbf{u})\cap U)|
 \ll_A (1+|U|T^2)^{\delta_A}.
\end{equation}
\end{lemma}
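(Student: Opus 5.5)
The plan is to count each rational $a/q$ with $T\le q<6T$, $a\in M_A(q;F_A(\mathbf{u})\cap U)$, through its canonical word, and then to compare these words with a cover of $F_A^\infty\cap U$ at scale $r\asymp T^{-2}$. Lemma~\ref{lem:FA-regularity} bounds that cover by $(1+|U|/r)^{\delta_A}\asymp_A(1+|U|T^2)^{\delta_A}$. The constraint $a/q\in F_A(\mathbf{u})$ only makes the count smaller, so the bound will be uniform in $\mathbf{u}$. We may also assume $T$ is large, since otherwise the count is $O_A(1)$.

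First we localize each rational $a/q$ to a small piece of $F_A^\infty$. Let $\mathbf{w}=\mathbf{w}(a/q)$, with last digit $b_s\ge2$. Consider the infinite word $\mathbf{w}'=\mathbf{w}\,1\,1\,1\cdots$ (or $\mathbf{w}\,A\,A\cdots$), which gives a point $\xi_{a/q}\in F_A^\infty$. The point $\xi_{a/q}$ has prefix $\mathbf{w}$, and $q(\mathbf{w})=q$. By \eqref{eq:KA-choice} we get $|\xi_{a/q}-a/q|\le K_Aq^{-2}\le K_AT^{-2}$. Hence $\xi_{a/q}\in F_A^\infty\cap W$, where $W$ is the $K_AT^{-2}$-neighbourhood of $U$, and $|W|\le|U|+2K_AT^{-2}$.

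Next we show the map $a/q\mapsto\xi_{a/q}$ is boundedly-many-to-one on intervals of length $r=T^{-2}$. This step uses the continued-fraction structure and is the key point. Suppose two distinct rationals $a/q, a'/q'$ in our range have images in a common interval $J$ of length $r$. Let $\mathbf{v}$ be the longest common prefix of $\mathbf{w}$ and $\mathbf{w}'$. The points $\xi_{a/q}$ and $\xi_{a'/q'}$ then lie in different level-$(|\mathbf{v}|+1)$ cylinders inside $F_A(\mathbf{v})$. The branch $\phi_{\mathbf{v}}$ has derivative $\asymp q(\mathbf{v})^{-2}$, and $F_A^\infty$ has gaps between distinct first-digit cylinders of size $\gg_A 1$. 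The only exception is adjacent cylinders whose junction point is approached by both. Choosing the tail $1,1,\dots$ keeps the points away from the cylinder endpoints by $\gg_A$ the cylinder length. Together these give $|\xi_{a/q}-\xi_{a'/q'}|\gg_A q(\mathbf{v})^{-2}$. Hence $q(\mathbf{v})\gg_A T$. Since $q(\mathbf{w}),q(\mathbf{w}')<6T$ and \eqref{eq:continuant-concatenation} holds, $\mathbf{w}$ and $\mathbf{w}'$ extend $\mathbf{v}$ by words of continuant $O_A(1)$. Such words have length $O_A(1)$, so there are $O_A(1)$ of them. Thus each interval of length $r$ contains images of only $O_A(1)$ rationals.

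Combining these steps, the left side of \eqref{eq:hensley-local-upper} is
$$\ll_A N_{T^{-2}}(F_A^\infty\cap W)\ll_A\bigl(1+(|U|+2K_AT^{-2})T^2\bigr)^{\delta_A}\ll_A(1+|U|T^2)^{\delta_A},$$
by Lemma~\ref{lem:FA-regularity}. I expect the main obstacle to be the separation claim in the injectivity step. We need genuine quantitative gaps between sibling cylinders of $F_A^\infty$, and care with digits $1$ and $A$ at the boundary. If the gap argument gets messy, I will fall back on a cleaner route. In that route I first apply the stopping-time decomposition $\mathcal S_L$ with $L\asymp|U|+T^{-2}$, as in the proof of Lemma~\ref{lem:FA-regularity}. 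Then, for each of the $O_A(1)$ words $\mathbf{s}\in\mathcal S_L$ meeting $U$, I count canonical words $\mathbf{s}\mathbf{t}$ with $q(\mathbf{s}\mathbf{t})\in[T,6T)$. By \eqref{eq:continuant-concatenation} this forces $q(\mathbf{t})\asymp_A T/q(\mathbf{s})\asymp T\sqrt{L}$. Theorem~\ref{thm:hensley}, summed over $O(1)$ dyadic ranges, then bounds the number of such $\mathbf{t}$ by $\ll_A(T^2L)^{\delta_A}$, which is the same bound.
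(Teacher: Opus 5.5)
Your proof is correct, and its skeleton is the paper's. You choose a point of $F_A^\infty\cap F_A(\mathbf{w}(a/q))$ for each counted fraction and observe via \eqref{eq:KA-choice} that these points lie in an $O_A(T^{-2})$-enlargement of $U$. You then show that an interval of length $T^{-2}$ contains only $O_A(1)$ of them, and invoke Lemma~\ref{lem:FA-regularity}.

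The difference is in the bounded-multiplicity step, which you single out as the key point. The paper gets it in one line from Farey spacing. Distinct reduced fractions with denominators below $6T$ are at least $(36T^2)^{-1}$ apart, and each chosen point lies within $K_AT^{-2}$ of its fraction. So chosen points in a common interval of length $T^{-2}$ come from fractions in an interval of length $(1+2K_A)T^{-2}$, and there are only $O_A(1)$ such fractions.

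Your longest-common-prefix argument reaches the same conclusion through the geometry of $F_A^\infty$. It is valid, but heavier than needed and slightly incomplete as written.
\begin{itemize}
\item The worry about junction points is unfounded. $F_A^\infty$ lies in a compact subinterval of $(0,1)$ depending only on $A$, so its first-digit pieces are $\gg_A 1$ apart for any choice of points. The tail $1,1,\ldots$ plays no role.
\item You omit the case where $\mathbf{w}(a/q)$ is a proper prefix of $\mathbf{w}(a'/q')$. There the two chosen points need not lie in different level-$(|\mathbf{v}|+1)$ cylinders. The conclusion $q(\mathbf{v})\gg_A T$ still holds trivially, because $q(\mathbf{v})=q\ge T$.
\end{itemize}

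The Farey route buys brevity and uses nothing about $F_A$ beyond \eqref{eq:KA-choice}; yours stays inside the symbolic picture. Your fallback via $\mathcal S_L$ and Hensley's upper bound for the suffixes $\mathbf{t}$ is also sound. It amounts to rerunning the proof of Lemma~\ref{lem:FA-regularity} on rationals rather than quoting it.
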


\begin{proof}
Distinct reduced fractions $p/q$ and $p'/q'$ with $q,q'<6T$ satisfy
$$
 \left|\frac pq-\frac{p'}{q'}\right|
 =\frac{|pq'-p'q|}{qq'}\ge\frac1{36T^2}.
$$
For every counted $p/q$, choose a point
$x_{p/q}\in F_A^\infty\cap F_A(\mathbf{w}(p/q))$.  By
\eqref{eq:KA-choice}, $|x_{p/q}-p/q|\ll_A T^{-2}$.  Thus the chosen points lie
in an $O_A(T^{-2})$-enlargement of $U$.  The separation argument above also
shows that every interval of length comparable to $T^{-2}$ contains only
$O_A(1)$ chosen points.  Covering the enlarged interval by
Lemma~\ref{lem:FA-regularity}, with $r\asymp_A T^{-2}$, proves
\eqref{eq:hensley-local-upper}.
\end{proof}

\begin{lemma}\label{lem:fixed-prefix-count}
Let $A\ge2$ and let $\mathbf{u}$ be an admissible word.  There is
$T_0=T_0(\mathbf{u})$ such that, for every $T\ge T_0$,
\begin{equation}\label{eq:fixed-prefix-count}
 \sum_{T\le q<6T}|M_A(q;F_A(\mathbf{u}))|
 \asymp_A q(\mathbf{u})^{-2\delta_A}T^{2\delta_A}
 \asymp_A (\diam F_A(\mathbf{u}))^{\delta_A}T^{2\delta_A}.
\end{equation}
The implicit constants are independent of $\mathbf{u}$.
\end{lemma}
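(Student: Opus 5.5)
The plan is to reduce the count over rationals with prefix $\mathbf{u}$ to a global count over rationals of denominator about $T/q(\mathbf{u})$. The reduction uses concatenation of words and \eqref{eq:continuant-concatenation}; after that, Theorem~\ref{thm:hensley} finishes the job. The second $\asymp_A$ in \eqref{eq:fixed-prefix-count} is immediate from \eqref{eq:cyldiam}, so only the first needs proof.

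\emph{Upper bound.} Rather than working with words directly, I would apply Lemma~\ref{lem:local-rational-upper} with $U$ equal to the full interval $I(\mathbf{u})$. Its length is at most $q(\mathbf{u})^{-2}$ by \eqref{eq:full-cylinder-length}. This gives
$$
 \sum_{T\le q<6T}|M_A(q;F_A(\mathbf{u}))|\ll_A (1+T^2q(\mathbf{u})^{-2})^{\delta_A}\ll_A q(\mathbf{u})^{-2\delta_A}T^{2\delta_A},
$$
valid once $T\ge q(\mathbf{u})$. This bound is uniform in $\mathbf{u}$ as required, and it fixes the first condition on $T_0$.

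\emph{Lower bound.} I would build an injection from canonical words into the counted set. Take canonical admissible words $\mathbf{v}=\mathbf{w}(b/s)$ with $b\in M_A(s)$, and form the concatenation $\mathbf{u}\mathbf{v}$. Since the last digit of $\mathbf{v}$ is at least $2$, the word $\mathbf{u}\mathbf{v}$ is canonical. It is also admissible, so it is the canonical word of a reduced rational $a/q\in F_A(\mathbf{u})$ with $q=q(\mathbf{u}\mathbf{v})$. Distinct $\mathbf{v}$ give distinct words and hence distinct rationals. By \eqref{eq:continuant-concatenation}, $q(\mathbf{u})s\le q<2q(\mathbf{u})s$. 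So if $s$ ranges over $[S,2S)$ with $S=T/q(\mathbf{u})$ (more precisely $S=\lceil T/q(\mathbf{u})\rceil$), then $q\in[T,4S q(\mathbf{u}))\subset[T,6T)$, for instance once $T\ge q(\mathbf{u})$ so that rounding costs at most a factor $3/2$. Theorem~\ref{thm:hensley} then yields at least $\gg_A S^{2\delta_A}\asymp (T/q(\mathbf{u}))^{2\delta_A}$ such words. This requires $S$ to be beyond the threshold implicit in Theorem~\ref{thm:hensley}, which defines $T_0(\mathbf{u})=C_A q(\mathbf{u})$ with $C_A$ depending only on $A$. The implicit constants stay independent of $\mathbf{u}$; only $T_0$ depends on it.

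The main point needing care is the bookkeeping in the lower bound: checking that the denominator window fits inside $[T,6T)$, and that $q(\mathbf{u}\mathbf{v})$ is indeed the reduced denominator. The latter holds because continuants give coprime $p,q$. Beyond that the argument is routine.
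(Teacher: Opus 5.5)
Your proposal is correct and follows the paper's proof. The upper bound is exactly the paper's application of Lemma~\ref{lem:local-rational-upper} to the full interval $\phi_{\mathbf{u}}([0,1])$. The lower bound is the same concatenation-plus-Hensley injection, with one variation: the paper appends a final digit $2$ to $\mathbf{v}$ and takes $X/2\le q(\mathbf{v})<X$ with $X=T/q(\mathbf{u})$, whereas you use the canonical $\mathbf{v}$ directly with $s\in[S,2S)$, which works equally well.

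One cosmetic point: the ceiling in $S$ is unnecessary, since Theorem~\ref{thm:hensley} applies to real $T$. Your remark that $T\ge q(\mathbf{u})$ makes rounding cost at most a factor $\tfrac32$ is slightly off, because it fails for $1<T/q(\mathbf{u})<4/3$. Your threshold $T_0=C_Aq(\mathbf{u})$ with $C_A$ large already covers this.
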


\begin{proof}
For the upper bound, every fraction counted on the left lies in the full
continued-fraction interval $\phi_{\mathbf{u}}([0,1])$, whose length is
$\asymp q(\mathbf{u})^{-2}$.  Lemma~\ref{lem:local-rational-upper}, applied to this
interval, gives
$$
 \sum_{T\le q<6T}|M_A(q;F_A(\mathbf{u}))|
 \ll_A \bigl(1+q(\mathbf{u})^{-2}T^2\bigr)^{\delta_A}
 \ll_A q(\mathbf{u})^{-2\delta_A}T^{2\delta_A}
$$
for $T$ sufficiently large relative to $q(\mathbf{u})$.

For the lower bound, put $X=T/q(\mathbf{u})$.  By
Theorem~\ref{thm:hensley}, there are $\gg_A X^{2\delta_A}$ canonical
bounded-digit words $\mathbf{v}$ with
$$
 \frac X2\le q(\mathbf{v})<X.
$$
For every such $\mathbf{v}$, the word $\mathbf{u}\mathbf{v}(2)$ is canonical and begins with $\mathbf{u}$.  By
\eqref{eq:continuant-concatenation} and the recurrence for appending the final
digit $2$,
$$
 2q(\mathbf{u})q(\mathbf{v})\le q\bigl(\mathbf{u}\mathbf{v}(2)\bigr)\le6q(\mathbf{u})q(\mathbf{v}).
$$
Consequently
$$
 T\le q\bigl(\mathbf{u}\mathbf{v}(2)\bigr)<6T.
$$
Distinct words $\mathbf{v}$ give distinct canonical fractions, which proves the lower
bound.  The last comparison in \eqref{eq:fixed-prefix-count} follows from
Lemma~\ref{lem:cylapprox}.
\end{proof}

\section{Product-set estimates modulo composite moduli}\label{sec:hitting}

Put $e_q(t)=e^{2\pi i t/q}$.  By an interval in $\Z/q\Z$ of length $h$ we
mean a set of $h$ consecutive residue classes.  The estimates in this section are mostly classical and in some form appeared in Schleischitz \cite{SchleischitzULC}, but we get the composite moduli statement, whereas \cite{SchleischitzULC} used the prime moduli case.  The first is Vinogradov's bilinear bound, see also
\cite[Ch.~VI]{Vinogradov}.

\begin{lemma}\label{lem:bilinear}
Let $q\ge2$, let $P,M\subseteq(\Z/q\Z)^\times$, and set
$$
 S_\ell(P,M)=\sum_{p\in P}\sum_{m\in M}e_q(\ell pm).
$$
Then
$$
 |S_\ell(P,M)|\le \sqrt{q(\ell,q)|P||M|}.
$$
\end{lemma}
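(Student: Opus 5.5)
The proof will be the standard Cauchy--Schwarz argument, which needs no hypothesis on $q$ beyond the gcd factor. Write
$$
 S_\ell(P,M)=\sum_{p\in P}\Bigl(\sum_{m\in M}e_q(\ell pm)\Bigr),
$$
and apply Cauchy--Schwarz in $p$. Then we enlarge the sum over $p\in P$ to a sum over all residues $x\in\Z/q\Z$, which is legitimate since every term is nonnegative. This gives
$$
 |S_\ell(P,M)|^2\le |P|\sum_{x\in\Z/q\Z}\Bigl|\sum_{m\in M}e_q(\ell xm)\Bigr|^2
 =|P|\sum_{m,m'\in M}\sum_{x\in\Z/q\Z}e_q\bigl(\ell x(m-m')\bigr).
$$

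The inner sum over $x$ is a complete character sum. It equals $q$ if $q\mid \ell(m-m')$ and $0$ otherwise. Write $d=(\ell,q)$. The condition $q\mid\ell(m-m')$ is equivalent to $(q/d)\mid(m-m')$, because $\ell/d$ is coprime to $q/d$. So for each fixed $m\in M$, the admissible $m'\in\Z/q\Z$ form one residue class modulo $q/d$ inside $\Z/q\Z$, and there are exactly $d$ such $m'$. Hence the number of pairs $(m,m')\in M\times M$ counted is at most $|M|\,d$. Combining these facts gives
$$
 |S_\ell(P,M)|^2\le |P|\cdot q\cdot d\,|M|,
$$
and taking square roots gives the claim.

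There is no real obstacle. The only point needing care is the count of solutions $m'$ for composite $q$, which yields the factor $(\ell,q)$ in place of the prime-modulus case, where this factor is $1$ unless $q\mid\ell$. The hypothesis that $P$ and $M$ consist of units is not even needed for this bound; it only matters for later applications.
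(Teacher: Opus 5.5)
Your proposal is correct and follows the paper's proof exactly: Cauchy--Schwarz in $p$, extending the sum to all residues modulo $q$, orthogonality forcing $q\mid\ell(m-m')$, and then counting at most $(\ell,q)$ admissible $m'$ for each $m\in M$. Your remark that the unit hypothesis on $P$ and $M$ is not needed for this bound is also accurate.
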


\begin{proof}
Cauchy--Schwarz gives
$$
 |S_\ell(P,M)|^2
 \le |P|\sum_{p\bmod q}
 \left|\sum_{m\in M}e_q(\ell pm)\right|^2.
$$
After expanding the square, orthogonality in $p$ shows that the inner sum is
nonzero only when
$$
 q\mid \ell(m-m').
$$
If $g=(\ell,q)$, this is equivalent to
$m\equiv m'\pmod{q/g}$.  For each $m\in M$ there are at most $g$ possible
residue classes $m'\pmod q$ satisfying this congruence.  The right-hand side
is therefore at most $|P|qg|M|$.
\end{proof}

Fourier inversion and Lemma~\ref{lem:bilinear} give the required interval count.

\begin{lemma}\label{lem:bilinear-count}
Let $q\ge2$, let $P,M\subseteq(\Z/q\Z)^\times$, and let
$H\subset\Z/q\Z$ be an interval of length $h$.  For every $\eps>0$,
\begin{equation}\label{eq:bilinear-count}
 \#\{(p,m)\in P\times M:pm\in H\}
 =\frac{h}{q}|P||M|
 +O_\eps\left(q^{1/2+\eps}(|P||M|)^{1/2}\right).
\end{equation}
\end{lemma}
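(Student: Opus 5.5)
The plan is to write the indicator of $H$ via finite Fourier expansion on $\Z/q\Z$, isolate the zero frequency as the main term, and bound the remaining frequencies with Lemma~\ref{lem:bilinear}. Write $H=\{c+1,\dots,c+h\}$ and
$$
 \mathbf 1_H(x)=\frac1q\sum_{\ell\bmod q}\widehat{\mathbf 1_H}(\ell)\,e_q(\ell x),
 \qquad
 \widehat{\mathbf 1_H}(\ell)=\sum_{y\in H}e_q(-\ell y).
$$
Summing over $(p,m)\in P\times M$ gives
$$
 \#\{(p,m):pm\in H\}=\frac1q\sum_{\ell}\widehat{\mathbf 1_H}(\ell)S_\ell(P,M).
$$
The term $\ell=0$ equals $\frac hq|P||M|$, which is the main term in \eqref{eq:bilinear-count}.

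For $\ell\ne0$, taking $\ell$ in $(-q/2,q/2]$, the geometric series gives the standard bound $|\widehat{\mathbf 1_H}(\ell)|\le\min(h,\|\ell/q\|^{-1}/2)\ll q/|\ell|$. Lemma~\ref{lem:bilinear} gives $|S_\ell|\le\sqrt{q(\ell,q)|P||M|}$. The error term is therefore at most
$$
 \ll (q|P||M|)^{1/2}\sum_{0<|\ell|\le q/2}\frac{(\ell,q)^{1/2}}{|\ell|}.
$$

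The main step is to control the gcd-weighted harmonic sum. Group $\ell$ according to $g=(\ell,q)$ and write $\ell=g\ell'$. Then
$$
 \sum_{0<\ell\le q/2}\frac{(\ell,q)^{1/2}}{\ell}\le\sum_{g\mid q}g^{-1/2}\sum_{\ell'\le q}\frac1{\ell'}\ll \tau(q)\log q.
$$
By the divisor bound $\tau(q)\ll_\eps q^{\eps}$, this is $\ll_\eps q^{\eps}$. This produces the error $O_\eps\bigl(q^{1/2+\eps}(|P||M|)^{1/2}\bigr)$, as claimed. The only delicate point is this composite-modulus gcd factor; it costs only $q^{\eps}$, because $g^{-1/2}$ exactly compensates the factor $(\ell,q)^{1/2}$ in Lemma~\ref{lem:bilinear}.
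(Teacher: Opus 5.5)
Your proposal is correct and follows essentially the same route as the paper: Fourier-expand the indicator of $H$, take the zero frequency as the main term, bound the nonzero frequencies by the geometric-series estimate $\ll q/|\ell|$ combined with Lemma~\ref{lem:bilinear}, and control $\sum_{0<\ell\le q/2}(\ell,q)^{1/2}/\ell$ by grouping according to $(\ell,q)$ to get $\tau(q)\log q\ll_\eps q^{\eps}$. The differences are cosmetic: you use an unnormalized Fourier transform with a $1/q$ prefactor, and you should state explicitly that the $\log q$ factor is also absorbed into $q^{\eps}$.
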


\begin{proof}
For $f:\Z/q\Z\to\mathbb C$, use the normalized Fourier transform
$$
 \widehat f(\ell)=\frac1q\sum_{x\bmod q}f(x)e_q(-\ell x),
 \qquad
 f(x)=\sum_{\ell\bmod q}\widehat f(\ell)e_q(\ell x).
$$
Consequently,
$$
 \#\{(p,m)\in P\times M:pm\in H\}
 =\sum_{\ell\bmod q}\widehat{1_H}(\ell)S_\ell(P,M).
$$
The term $\ell=0$ is $h|P||M|/q$.  A geometric-series estimate gives, for
$1\le\ell<q$,
$$
 |\widehat{1_H}(\ell)|
 \ll \min\left\{\frac hq,
 \frac1{\min(\ell,q-\ell)}\right\}.
$$
Using Lemma~\ref{lem:bilinear}, pairing $\ell$ with $q-\ell$, and grouping
according to $d=(\ell,q)$, the contribution of the nonzero frequencies is at
most
\begin{align*}
 (q|P||M|)^{1/2}
 \sum_{1\le\ell\le q/2}\frac{(\ell,q)^{1/2}}{\ell}
 &\ll (q|P||M|)^{1/2}
 \sum_{d\mid q}d^{-1/2}\sum_{m\le q/d}\frac1m\\
 &\ll (q|P||M|)^{1/2}\tau(q)\log q.
\end{align*}
The elementary divisor bound $\tau(q)\log q\ll_\eps q^\eps$ completes the
proof.
\end{proof}

What we need is that only a small set of multipliers can send
every element of $M$ outside a fixed interval.

\begin{corollary}\label{lem:interval-hitting}
Let $q\ge2$, let $\varnothing\ne M\subseteq(\Z/q\Z)^\times$, and let
$H\subset\Z/q\Z$ be an interval of length $|H|\ge dq$, where $d>0$ is fixed.
Define
$$
 E=\{e\in(\Z/q\Z)^\times:eM\cap H=\varnothing\}.
$$
Then, for every $\eps>0$,
\begin{equation}\label{eq:Ebound}
 |E|\ll_{d,\eps}\frac{q^{1+\eps}}{|M|}.
\end{equation}
\end{corollary}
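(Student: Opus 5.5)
Apply Lemma~\ref{lem:bilinear-count} with $P=E$ and the given $M$, and compare the main term with the error term. By the definition of $E$, no pair $(e,m)\in E\times M$ has $em\in H$. So the left-hand side of \eqref{eq:bilinear-count} is zero, and
$$
 \frac{|H|}{q}|E||M|\ll_\eps q^{1/2+\eps}(|E||M|)^{1/2}.
$$
We may assume $E\neq\varnothing$, since otherwise there is nothing to prove. Using $|H|\ge dq$, the left side is at least $d|E||M|$. Dividing by $(|E||M|)^{1/2}$ and squaring gives
$$
 |E||M|\ll_{d,\eps} q^{1+2\eps}.
$$
Replacing $\eps$ by $\eps/2$ yields \eqref{eq:Ebound}.

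Some housekeeping is needed. Lemma~\ref{lem:bilinear-count} is stated for an interval of length $h$ with $h\le q$. If $|H|\ge q$ then $H$ is all of $\Z/q\Z$, so $E$ is empty. If $d>1$, the hypothesis cannot hold at all. In every remaining case we can apply the lemma with $h=|H|$. The implicit constant depends on $\eps$ through the divisor bound and on $d$ through the division by $d$. This is consistent with the dependence $\ll_{d,\eps}$ in the statement.

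There is no real obstacle here: all the substance lies in Lemma~\ref{lem:bilinear-count}. The only points to check are that $E\subseteq(\Z/q\Z)^\times$, so that Lemma~\ref{lem:bilinear} applies to the sets $P=E$ and $M$, and that the counting identity is used with the correct sign, so that the empty count forces the main term to be bounded by the error term.
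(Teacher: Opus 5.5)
Your proposal is correct and follows the paper's proof: apply Lemma~\ref{lem:bilinear-count} with $P=E$, observe that the count vanishes, so the main term $d|E||M|$ is dominated by the error term, then square and rescale $\eps$. Your extra housekeeping ($E=\varnothing$, $|H|\ge q$, $d>1$) is harmless and only makes explicit what the paper leaves implicit.
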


\begin{proof}
Apply Lemma~\ref{lem:bilinear-count} with $P=E$.  The count on the left of
\eqref{eq:bilinear-count} is zero, while $|H|/q\ge d$.  Therefore
$$
 |E||M|\ll_{d,\eps}
 q^{1/2+\eps}(|E||M|)^{1/2}.
$$
Squaring and replacing $2\eps$ by $\eps$ gives \eqref{eq:Ebound}.
\end{proof}

\section{Getting a lower bound in ULC}\label{sec:rational-estimate}

The following elementary lemma gives the lower bounds used in the construction.

\begin{lemma}\label{lem:finite-pair}
Fix $A\ge2$.  Suppose $q\ge2$, $p\in\Z$ with $(p,q)=1$,
$a\in M_A(q)$, and $r\in\Z$ satisfies
$$
 r\equiv pa\pmod q.
$$
Put $\gamma_A=1/(A+2)$.  Then, for every $1\le n<q$,
\begin{equation}\label{eq:finitepair}
 \left\|\frac{np}{q}\right\|
 \left\|\frac{nr}{q}\right\|\ge\frac{\gamma_A}{q},
\end{equation}
and
\begin{equation}\label{eq:finitepair-individual}
 \left\|\frac{np}{q}\right\|,
 \left\|\frac{nr}{q}\right\|\ge\frac{2\gamma_A}{q}.
\end{equation}
\end{lemma}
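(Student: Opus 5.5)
The idea is to reduce both estimates to Lemma~\ref{lem:finite-badness} by a change of variables.

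Fix $1\le n<q$ and put $m\equiv np\pmod q$, with $m$ chosen as the residue in $[1,q-1]$. Since $(p,q)=1$ and $q\nmid n$, we have $m\not\equiv0$. Then $nr\equiv npa\equiv ma\pmod q$. Hence
$$
 \left\|\frac{nr}{q}\right\|=\left\|\frac{ma}{q}\right\|
 \qquad\text{and}\qquad
 \left\|\frac{np}{q}\right\|=\left\|\frac mq\right\|.
$$
The distance $\|m/q\|$ depends only on $m$ modulo $q$. We may therefore replace $m$ by $m'=\min(m,q-m)\in[1,q/2]$. This gives $\|m'/q\|=m'/q$, and also $\|m'a/q\|=\|ma/q\|$, because $m'\equiv\pm m$ and $\|\cdot\|$ is even.

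Now apply Lemma~\ref{lem:finite-badness} with $m'$ in place of $n$; this is allowed since $1\le m'<q$. It gives
$$
 \left\|\frac{np}{q}\right\|\left\|\frac{nr}{q}\right\|
 =\frac{m'}{q}\left\|\frac{m'a}{q}\right\|
 \ge\frac{1}{(A+2)q}=\frac{\gamma_A}{q},
$$
which proves \eqref{eq:finitepair}.

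For \eqref{eq:finitepair-individual}, each factor is at most $1/2$. The bound just proved therefore forces each factor to be at least $2\gamma_A/q$: if one factor were below $2\gamma_A/q$, the product would be below $(2\gamma_A/q)\cdot\frac12=\gamma_A/q$, a contradiction.

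The argument is elementary. The only point needing care is reducing $m$ to the symmetric representative $m'$ while keeping both distances unchanged, which works because $\|\cdot\|$ is even.
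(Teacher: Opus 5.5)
Your proposal is correct and is essentially the paper's own proof. Both represent $np$ by $m\in\{1,\ldots,q-1\}$, use $nr\equiv ma\pmod q$, and pass to the representative in $[1,q/2]$ via the evenness of $\|\cdot\|$. Both then apply Lemma~\ref{lem:finite-badness} and deduce the individual bounds from the product bound, since each factor is at most $1/2$. Your explicit checks that $m\not\equiv 0\pmod q$ and that $1\le m'<q$ fill in details the paper leaves implicit.
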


\begin{proof}
Let $m\in\{1,\ldots,q-1\}$ represent $np\pmod q$.  Since
$r\equiv pa\pmod q$,
$$
 \left\|\frac{np}{q}\right\|
 \left\|\frac{nr}{q}\right\|
 =\left\|\frac mq\right\|
  \left\|\frac{ma}{q}\right\|.
$$
Replacing $m$ by $q-m$ does not change either distance, so we may assume
$m\le q/2$.  Lemma~\ref{lem:finite-badness} now gives
$$
 \left\|\frac mq\right\|
 \left\|\frac{ma}{q}\right\|
 =\frac1q m\left\|\frac{ma}{q}\right\|
 \ge\frac1{(A+2)q}.
$$
This is \eqref{eq:finitepair}.  Since either factor in the product is at most
$1/2$, \eqref{eq:finitepair} also implies
\eqref{eq:finitepair-individual}.
\end{proof}

The next lemma shows that the preceding bounds still hold under $O(q^{-2})$ perturbations.

\begin{lemma}\label{lem:limit-pair}
Fix $A\ge2$ and $K\ge1$.  Suppose that for infinitely many integers
$q\to\infty$ there are integers $p,a,r$ such that
$$
 1\le p,r<q,\qquad (p,q)=1,\qquad a\in M_A(q),
 \qquad r\equiv pa\pmod q,
$$
and
\begin{equation}\label{eq:quadratic-approximation-pair}
 \left|\xi-\frac pq\right|\le\frac K{q^2},
 \qquad
 \left|\zeta-\frac rq\right|\le\frac K{q^2}.
\end{equation}
Then $(\xi,\zeta)\in \mathcal U$.
\end{lemma}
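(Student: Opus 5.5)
The plan is to take $Q=cq$ for a small constant $c>0$ depending on $A$ and $K$, along the given sequence of $q$. We then show that $Q\min_{1\le n\le Q}\|n\xi\|\,\|n\zeta\|$ is bounded below by a positive constant independent of $q$. Since $q\to\infty$ along infinitely many values, this gives $\limsup>0$, so $(\xi,\zeta)\in\mathcal U$.

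Fix such $q,p,a,r$ and any $1\le n\le cq$ (note $cq<q$). By the triangle inequality and \eqref{eq:quadratic-approximation-pair},
$$
 \|n\xi\|\ge\left\|\frac{np}{q}\right\|-\frac{nK}{q^2}\ge\left\|\frac{np}{q}\right\|-\frac{cK}{q},
$$
and the same bound holds with $\zeta,r$ in place of $\xi,p$. By \eqref{eq:finitepair-individual} each of $\|np/q\|$ and $\|nr/q\|$ is at least $2\gamma_A/q$. Choosing $c\le\gamma_A/K$ makes the error $cK/q$ at most half of each term, so
$$
 \|n\xi\|\ge\tfrac12\left\|\frac{np}{q}\right\|,\qquad \|n\zeta\|\ge\tfrac12\left\|\frac{nr}{q}\right\|.
$$
Multiplying and applying \eqref{eq:finitepair} gives $\|n\xi\|\,\|n\zeta\|\ge\gamma_A/(4q)$ for every $1\le n\le cq$.

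Hence, with $Q=\lfloor cq\rfloor$ (which is $\ge1$ once $q$ is large), we get $Q\min_{n\le Q}\|n\xi\|\,\|n\zeta\|\ge \lfloor cq\rfloor\gamma_A/(4q)\ge c\gamma_A/8$ for large $q$. This lower bound holds along infinitely many $Q\to\infty$, which is the claim.

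The only delicate point is the perturbation step. A product bound alone is not enough to absorb additive errors, because one factor could be tiny. This is exactly why the individual lower bounds \eqref{eq:finitepair-individual} are needed, and why $n$ has to be restricted to $n\le cq$ rather than the full range $n<q$.
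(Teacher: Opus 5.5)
Your proof is correct and is essentially the paper's argument: truncate at $Q=\lfloor cq\rfloor$, use the individual bounds \eqref{eq:finitepair-individual} to absorb the $nK/q^2$ perturbation into a fixed fraction of each factor, and then apply the product bound \eqref{eq:finitepair}. The only difference is in constants. The paper takes $\lambda\le\min\{1/2,\gamma_A/(2K)\}$, so the error is at most a quarter of each factor and the constant is $9/16$; your choice $c\le\gamma_A/K$ allows half and gives $1/4$.
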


\begin{proof}
Let $\gamma_A=1/(A+2)$ and choose
$$
 0<\lambda\le\min\left\{\frac12,\frac{\gamma_A}{2K}\right\}.
$$
For one of the moduli in the hypothesis, set $Q=\lfloor\lambda q\rfloor$.
For all sufficiently large $q$ and every $1\le n\le Q$, Lemma~\ref{lem:finite-pair} gives
$$
 \left\|\frac{np}{q}\right\|,
 \left\|\frac{nr}{q}\right\|\ge\frac{2\gamma_A}{q},
 \qquad
 \left\|\frac{np}{q}\right\|
 \left\|\frac{nr}{q}\right\|\ge\frac{\gamma_A}{q}.
$$
The function $x\mapsto\|x\|$ is $1$-Lipschitz.  Hence
\eqref{eq:quadratic-approximation-pair} changes either factor by at most
$$
 \frac{Kn}{q^2}\le\frac{K\lambda}{q}
 \le\frac{\gamma_A}{2q},
$$
which is at most one quarter of its rational lower bound.  Therefore
$$
 \|n\xi\|\,\|n\zeta\|
 \ge\frac9{16}
 \left\|\frac{np}{q}\right\|
 \left\|\frac{nr}{q}\right\|
 \ge\frac{9\gamma_A}{16q}
 \qquad(1\le n\le Q).
$$
It follows that
$$
 Q\min_{1\le n\le Q}\|n\xi\|\,\|n\zeta\|
 \ge\frac{9\gamma_A}{16}\frac Qq.
$$
Along the chosen sequence, $Q/q\to\lambda$, so the limsup is positive.
\end{proof}

\section{The pair-dimensional construction}\label{sec:pair-dimension}

This section is devoted to a proof of Theorem~\ref{thm:pair-dim}.  Throughout this section, write
$\delta=\delta_A$. For denominators $q\asymp T$, Hensley's theorem gives
about $T^{2\delta}$ choices for the first coordinate, and on each sufficiently
rich modulus we retain $T^\beta$ multipliers, where $\beta$ is some parameter specified below.  The resulting rectangles have
diameter $O(T^{-2})$ and their number is of order
$$
 T^{2\delta+\beta}=T^{2s},\qquad s=\delta+\frac\beta2.
$$


Once $A$ and $\beta$ are fixed, implicit constants may depend on them, but not
on the prefix, interval, $T$, or ball.

\medskip
\noindent\emph{Admissible rectangles.}
Let $\mathbf{u}$ be an admissible word and let $J\subset(0,1)$ be a nondegenerate
interval.  An admissible $T$-rectangle over $(\mathbf{u},J)$ is a set
\begin{equation}\label{eq:pair-rectangle-form}
 F_A(\mathbf{w}(p/q))\times
 \left[\frac rq-\frac{K_A}{q^2},\frac rq+\frac{K_A}{q^2}\right]
 \subset F_A(\mathbf{u})\times J,
\end{equation}
where $T\le q<6T$, $p\in M_A(q;F_A(\mathbf{u}))$, $b\in M_A(q)$,
and $r\in\{1,\ldots,q-1\}$ is the least positive residue satisfying
$r\equiv pb\pmod q$.

\begin{lemma}\label{lem:pair-rectangles}
Assume
\begin{equation}\label{eq:beta-range}
 \max\left\{2-2\delta,\,-1+\sqrt{5-2\delta}\right\}<\beta<2\delta-1,
\end{equation}
and put
\begin{equation}\label{eq:s-a-def}
 s=\delta+\frac\beta2,\qquad \alpha=\delta+1.
\end{equation}
There is $\chi>0$ such that, for every admissible word $\mathbf{u}$ and every
nondegenerate interval $J\subset(0,1)$, there is $T_0=T_0(\mathbf{u},J)$ for
which every $T\ge T_0$ admits a finite family $\mathcal R_T(\mathbf{u},J)$ of
admissible $T$-rectangles satisfying
\begin{equation}\label{eq:pair-rectangle-lower}
 \#\mathcal R_T(\mathbf{u},J)
 \gg (\diam F_A(\mathbf{u}))^\delta |J|T^{2s}.
\end{equation}
The rectangles are pairwise separated by $\gg T^{-2}$.  Moreover, every ball
$B(z,\rho)\subset\R^2$ satisfies
\begin{align}
 \#\{Q\in\mathcal R_T(\mathbf{u},J):Q\cap B(z,\rho)\ne\varnothing\}
 &\ll (1+\rho T^2)^s &&(\rho\ge T^{-2}),
 \label{eq:pair-local-s}\\
 \#\{Q\in\mathcal R_T(\mathbf{u},J):Q\cap B(z,\rho)\ne\varnothing\}
 &\ll \rho^\alpha T^{2s} &&(\rho\ge T^{-\chi}).
 \label{eq:pair-local-a}
\end{align}
\end{lemma}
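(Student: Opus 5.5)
The plan is to build $\mathcal R_T(\mathbf u,J)$ modulus by modulus. For each $q\in[T,6T)$ put $P_q=M_A(q;F_A(\mathbf u))$ and $M_q=M_A(q)$, and view $M_q\subseteq(\Z/q\Z)^\times$. Split $J$ into $L\asymp |J|T^{\beta}$ consecutive subintervals $J_1,\dots,J_L$ of length $\asymp T^{-\beta}$, and let $H_k\subset\Z/q\Z$ be the interval of residues $r$ with $r/q\in J_k$ (shrunk slightly so that the $K_A/q^2$-neighbourhood of $r/q$ stays inside $J$). For each $p\in P_q$ and each $k$ we want one $b\in M_q$ with $pb\in H_k$. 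The resulting rectangle is $F_A(\mathbf w(p/q))\times[r/q\pm K_A/q^2]$ with $r\equiv pb$. So each retained $p$ carries exactly $L\asymp|J|T^\beta$ rectangles, and their second coordinates are equidistributed at scale $T^{-\beta}$ across $J$.

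\emph{Exceptional multipliers and counting.} Call $p$ bad for $(q,k)$ if $pM_q\cap H_k=\varnothing$. Rerunning the proof of Corollary~\ref{lem:interval-hitting} with $d\asymp T^{-\beta}$ and keeping $d$ explicit bounds the bad set by $\ll q^{1+2\beta+\eps}/|M_q|$ for each $k$. Summing over $k$ gives at most $q^{1+3\beta+\eps}/|M_q|$ bad $p$. We restrict to \emph{rich} moduli, for which
$$|M_q|\ge c\,T^{2\delta-1}\quad\text{and}\quad |P_q|\ge c\,(\diam F_A(\mathbf u))^\delta T^{2\delta-1}.$$
Theorem~\ref{thm:hensley}, Lemma~\ref{lem:fixed-prefix-count} and the upper bound of Lemma~\ref{lem:local-rational-upper} (a pigeonhole or second-moment argument on $\sum_q|P_q|$ versus $\sum_q|M_q|$) should show that rich moduli carry a positive proportion of $\sum_q|P_q|\asymp(\diam F_A(\mathbf u))^\delta T^{2\delta}$. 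On rich moduli the bad $p$ number $\ll T^{2+3\beta-2\delta+\eps}$. This must be $o(T^{2\delta-1})$, and here the constraints in \eqref{eq:beta-range} enter; the lower bound $\beta>\max\{2-2\delta,-1+\sqrt{5-2\delta}\}$ presumably comes from this bookkeeping combined with the separation thinning below. I expect this step to be the main obstacle, and may need to replace ``one hit per $H_k$'' by hits in a positive proportion of the $H_k$ to get the right exponents. Discarding bad $p$ and multiplying by $L$ gives \eqref{eq:pair-rectangle-lower}: $\gg(\diam F_A(\mathbf u))^\delta|J|T^{2\delta+\beta}=(\diam F_A(\mathbf u))^\delta|J|T^{2s}$.

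\emph{Separation.} First coordinates lie in cylinders $F_A(\mathbf w(p/q))$ of diameter $\le K_AT^{-2}$. Distinct fractions with denominator $<6T$ are $\ge(36T^2)^{-1}$ apart, and the same holds for the $r/q$. So two rectangles that are not $\gg T^{-2}$ apart must have both coordinates within $O(T^{-2})$, and this happens only for $O_A(1)$ partners per rectangle. A greedy thinning by a constant factor then yields pairwise $\gg T^{-2}$ separation without harming \eqref{eq:pair-rectangle-lower}.

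\emph{Local bounds.} Take a ball of radius $\rho\ge T^{-2}$. By Lemma~\ref{lem:local-rational-upper}, the number of admissible $p/q$ whose cylinder meets its projection is $\ll(1+\rho T^2)^\delta$. For each such $p/q$, equidistribution at scale $T^{-\beta}$ means the number of retained $r$ in an interval of length $\rho$ is $\ll\min(T^\beta,1+\rho T^\beta)$. Hence the count is $\ll(1+\rho T^2)^\delta\min(T^\beta,1+\rho T^\beta)$. A short case check gives \eqref{eq:pair-local-s}:
\begin{itemize}[label={}]
\item if $\rho\le T^{-\beta}$, the count is $\ll(\rho T^2)^\delta$;
\item if $\rho\ge T^{-\beta}$, use $\rho T^\beta\le(\rho T^2)^{\beta/2}$, which is equivalent to $\rho^{1-\beta/2}\le1$.
\end{itemize}
For $\rho\ge T^{-\chi}$ with $0<\chi<\beta$ we have $\rho T^\beta\ge1$. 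The same product is then $\ll(\rho T^2)^\delta\rho T^\beta=\rho^{\delta+1}T^{2s}$, which is \eqref{eq:pair-local-a} with $\alpha=\delta+1$.
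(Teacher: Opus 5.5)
\textbf{There is a genuine gap, in the step you flagged (exceptional multipliers and counting). It is fatal, not a bookkeeping issue.}

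Running the proof of Corollary~\ref{lem:interval-hitting} with $|H_k|\asymp T^{-\beta}q$ gives $|E_k|\ll q^{1+2\beta+\eps}/|M_q|$. Since $|M_q|<q$, this bound is at least $q^{2\beta}$, which exceeds the trivial bound $q$ as soon as $\beta>1/2$. But $\delta=\delta_A<1$, so \eqref{eq:beta-range} forces $\beta>-1+\sqrt{5-2\delta}>\sqrt3-1>1/2$. Hence, for every admissible $\beta$, the bilinear estimate says nothing about whether an individual $p$ hits an individual window $H_k$:
\begin{itemize}
\item your summed requirement $T^{2+3\beta-2\delta+\eps}=o(T^{2\delta-1})$ would need $\beta<(4\delta-3)/3<1/3$;
\item the relaxed version you mention (each good $p$ hits a positive proportion of the $H_k$) would need $\sum_k|E_k|\ll L|P_q|$, i.e.\ $q^{1+2\beta+\eps}\ll |M_q||P_q|\le q^2$, i.e.\ $\beta<1/2$.
\end{itemize}
Your guess about the origin of the hypotheses is also backwards. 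Per-$p$ bookkeeping can only produce \emph{upper} bounds on $\beta$. The lower bounds in \eqref{eq:beta-range} come from an argument that uses the bilinear estimate only in aggregate.

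\textbf{How the paper avoids this.} It restricts to moduli with $|M_A(q)|\ge T^\beta$ and fixes $M_q'\subset M_A(q)$ with $|M_q'|\asymp T^\beta$. It then keeps \emph{all} pairs $(p,b)\in M_A(q;F_A(\mathbf u))\times M_q'$ with $pb \bmod q$ in one long interval $H_q$, where $|H_q|\gg|J|q$. Lemma~\ref{lem:bilinear-count} gives main term $\gg|J|T^\beta|M_A(q;F_A(\mathbf u))|$ and error $q^{1/2+\eps}(|M_A(q;F_A(\mathbf u))|T^\beta)^{1/2}$. After Cauchy--Schwarz over $q$, the total error $T^{1+\delta+\beta/2+\eps}$ is $o(T^{2s})$ exactly when $s>1$, i.e.\ $\beta>2-2\delta$. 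No per-$p$ information is needed.

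\textbf{The price, and how the local bounds are recovered.} The second coordinates are no longer spread out for each fixed $p$, so your local bound $(1+\rho T^2)^\delta\min(T^\beta,1+\rho T^\beta)$ is unavailable. With $X=1+\rho T^2$, the paper instead takes the better of two bounds:
\begin{itemize}
\item the crude bound $(1+\rho T)X^\delta$;
\item the bilinear count on the short interval $H_{q,2}$, summed over the $p/q$ in the window, giving $(\rho+T^{-1})T^\beta X^\delta+T^{1+\beta/2+\eps}X^{\delta/2}$.
\end{itemize}
In the range $2T<X<T^2$, these bounds (normalized by $X^s$) cross at $X=T^{(2+\beta/2+\eps)/(1+\delta/2)}$. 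Their minimum stays bounded there exactly when $\beta^2+2\beta+2\delta-4>0$, which is the condition $\beta>-1+\sqrt{5-2\delta}$. Similarly, \eqref{eq:pair-local-a} needs $\chi<(s-1-\eps)/(1+\delta/2)$ so that the bilinear error term is dominated; $\chi<\beta$ alone is not enough.

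Your separation argument and your case analysis for the local bounds would be fine if the per-$p$ structure could be built. The problem is that the construction itself cannot be carried out with the available hitting estimates.
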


\begin{proof}
Choose $\eps>0$ so small that
\begin{equation}\label{eq:eps-choice-pair}
 s>1+\eps,
 \qquad
 \left(1-\frac\beta2\right)
 \frac{2+\beta/2+\eps}{1+\delta/2}<1.
\end{equation}
The first inequality is possible because $s>1$, which is equivalent to
$\beta>2-2\delta$. The second inequality is needed for some local estimate in Step~3 below. 

Fix
\begin{equation}\label{eq:chi-choice}
 0<\chi<\min\left\{1,
 \frac{s-1-\eps}{1+\delta/2}\right\}.
\end{equation}

\medskip
\noindent\emph{Step 1: construction and total count.}
Call $q\in[T,6T)$ \emph{rich} if $|M_A(q)|\ge T^\beta$.  For each
rich $q$, choose a subset $M_q'\subset M_A(q)$ with
$|M_q'|\asymp T^\beta$. For instance, for large $T$ we may require
$T^\beta/2\le |M_q'|\le T^\beta$.  Consider the total prefix count
$$
 S(T;\mathbf{u})=\sum_{T\le q<6T}|M_A(q;F_A(\mathbf{u}))|,
$$
which counts pairs $(q,p)$ with $T\le q<6T$ and $p\in M_A(q;F_A(\mathbf{u}))$.
If $q$ is not rich, then
$|M_A(q;F_A(\mathbf{u}))|\le |M_A(q)|<T^\beta$.  Since there are $O(T)$ possible
moduli $q\in[T,6T)$, the total contribution of the non-rich moduli to
$S(T;\mathbf{u})$ is at most
$$
 \sum_{\substack{T\le q<6T\\ q\ \mathrm{not\ rich}}}
 |M_A(q;F_A(\mathbf{u}))|\ll T^{1+\beta}.
$$
The inequality $1+\beta<2\delta$ and Lemma~\ref{lem:fixed-prefix-count} show that
$S(T;\mathbf{u})\gg (\diam F_A(\mathbf{u}))^\delta T^{2\delta}$.  Hence the rich moduli alone
already contribute
\begin{equation}\label{eq:high-prefix-mass}
 \sum_{\substack{T\le q<6T\\q\ \mathrm{rich}}}
 |M_A(q;F_A(\mathbf{u}))|
 \gg (\diam F_A(\mathbf{u}))^\delta T^{2\delta}.
\end{equation}

Choose an interval $J'\Subset J$ with $|J'|\ge |J|/3$.  For sufficiently
large $T$, every interval of radius $K_Aq^{-2}$ centered at a point
$r/q\in J'$ is contained in $J$.  Let
$$
 H_q=\{r\bmod q:1\le r<q,\ r/q\in J'\}.
$$
Then $|H_q|\gg |J|q$.  For a rich modulus $q$, let
$$
 N_q:=\#\{(p,b)\in M_A(q;F_A(\mathbf{u}))\times M_q': pb\bmod q\in H_q\}.
$$
For every pair counted by $N_q$, let $r\in\{1,\ldots,q-1\}$ be the least
positive residue satisfying $r\equiv pb\pmod q$.  Since $r/q\in J'$, the corresponding set
$$
 Q(q,p,b):=F_A(\mathbf{w}(p/q))\times
 \left[\frac rq-\frac{K_A}{q^2},\frac rq+\frac{K_A}{q^2}\right]
$$
is an admissible rectangle contained in $F_A(\mathbf{u})\times J$.  Let
$\widetilde{\mathcal R}_T(\mathbf{u},J)$ be the family of all rectangles $Q(q,p,b)$
obtained in this way.  Applying Lemma~\ref{lem:bilinear-count} with
$$
 P=M_A(q;F_A(\mathbf{u})),\qquad M=M_q',\qquad H=H_q
$$
therefore gives
$$
 N_q=\frac{|H_q|}{q}\,|M_A(q;F_A(\mathbf{u}))|\,|M_q'|
 +O_\eps\!\left(q^{1/2+\eps}|M_A(q;F_A(\mathbf{u}))|^{1/2}T^{\beta/2}\right).
$$
Since $|H_q|\gg |J|q$ and $|M_q'|\asymp T^\beta$, the main term for a
fixed rich modulus is
$$
 \gg |J|T^\beta|M_A(q;F_A(\mathbf{u}))|.
$$
Summing this lower bound over all rich moduli and using
\eqref{eq:high-prefix-mass}, we obtain a total main-term contribution
$$
 \gg |J|T^\beta\sum_{\substack{T\le q<6T\\ q\ \mathrm{rich}}}|M_A(q;F_A(\mathbf{u}))|
 \gg (\diam F_A(\mathbf{u}))^\delta |J|T^{2\delta+\beta}
 =(\diam F_A(\mathbf{u}))^\delta |J|T^{2s}.
$$
The sum of the error terms is, by Cauchy--Schwarz and the upper bound in
Lemma~\ref{lem:fixed-prefix-count},
\begin{align*}
 T^{1/2+\beta/2+\eps}
 \sum_{T\le q<6T}|M_A(q;F_A(\mathbf{u}))|^{1/2}
 &\ll T^{1+\beta/2+\eps}
 \left(\sum_{T\le q<6T}|M_A(q;F_A(\mathbf{u}))|\right)^{1/2}\\
 &\ll T^{\delta+1+\beta/2+\eps}.
\end{align*}
Since $s>1+\eps$, the error is $o(T^{2s})$.  For each fixed
$(\mathbf{u},J)$, the factor $(\diam F_A(\mathbf{u}))^\delta|J|$ in the
main term is positive, so $T_0(\mathbf{u},J)$ may be enlarged until the error
is at most half of that main term.  Thus the lower-bound constant below is
independent of $\mathbf{u}$ and $J$, although the threshold $T_0(\mathbf{u},J)$
may depend on them:
$$
 \sum_{\substack{T\le q<6T\\q\ \mathrm{rich}}}N_q
 \gg (\diam F_A(\mathbf{u}))^\delta |J|T^{2s}.
$$

The map from counted triples $(q,p,b)$ to the associated points $(p/q,r/q)$ is injective.
Suppose first that $p/q\ne p'/q'$.  Both fractions are reduced and have
denominators in $[T,6T)$, so
$$
 \left|\frac pq-\frac{p'}{q'}\right|\ge\frac1{qq'}\ge\frac1{36T^2}.
$$
If $p/q=p'/q'$, reducedness gives $p=p'$ and $q=q'$.  Two distinct counted
triples must then have $b\ne b'$, and multiplication by $p$ is invertible modulo
$q$, so the corresponding residues $r\equiv pb\pmod q$ and
$r'\equiv pb'\pmod q$ are distinct.  Their second coordinates differ by
at least $q^{-1}$.  Thus the map $(q,p,b)\mapsto(p/q,r/q)$ is injective.  Moreover, distinct
canonical fractions have distinct canonical words and hence distinct
first-coordinate cylinders.  Therefore distinct counted triples yield distinct
rectangles, and their associated points are $\gg T^{-2}$-separated.
In particular,
$$
 \#\widetilde{\mathcal R}_T(\mathbf{u},J)
 =\sum_{\substack{T\le q<6T\\q\ \mathrm{rich}}}N_q
 \gg (\diam F_A(\mathbf{u}))^\delta |J|T^{2s}.
$$
We also have
\begin{equation}\label{eq:pair-total-upper}
 \#\widetilde{\mathcal R}_T(\mathbf{u},J)
 \le T^\beta\sum_{T\le q<6T}|M_A(q;F_A(\mathbf{u}))|
 \ll T^{2s}.
\end{equation}

For $Q=Q(q,p,b)\in\widetilde{\mathcal R}_T(\mathbf{u},J)$, write
$c(Q)=(p/q,r/q)$.  The preceding argument gives
$|c(Q)-c(Q')|\ge (36T^2)^{-1}$ for distinct associated points, while every
rectangle has diameter $O_A(T^{-2})$.  Choose a maximal subfamily whose
associated points are separated by $C_AT^{-2}$, with $C_A$ larger than twice
the rectangle-diameter constant.  The $(36T^2)^{-1}$ separation shows that
each selected point excludes only $O_A(1)$ members, so the subfamily has
cardinality comparable to $\widetilde{\mathcal R}_T(\mathbf{u},J)$ and its
rectangles are mutually $\gg_A T^{-2}$-separated.  Thus
\eqref{eq:pair-rectangle-lower} is preserved.

\medskip
\noindent\emph{Step 2: local counting estimates.}
It is enough to estimate the larger family
$\widetilde{\mathcal R}_T(\mathbf{u},J)$, since $\mathcal R_T(\mathbf{u},J)$ is a subfamily.
Fix a ball
$B(z,\rho)$ with $\rho\ge T^{-2}$, let $N(\rho)$ be the number of rectangles
meeting it, and put
$$
 X=1+\rho T^2.
$$
If a rectangle meets the ball, its associated point $(p/q,r/q)$ lies in a product $I_1\times I_2$
of intervals of length $O_A(\rho)$.  Define
$$
 P_q=\{p\in M_A(q;F_A(\mathbf{u})):p/q\in I_1\}.
$$
Lemma~\ref{lem:local-rational-upper} gives
\begin{equation}\label{eq:local-p-mass}
 \sum_{T\le q<6T}|P_q|\ll X^\delta.
\end{equation}

For fixed $q$ and $p\in P_q$, the map $b\mapsto pb\pmod q$ is injective, and
$I_2$ contains $O(1+\rho q)$ residue points of the form $r/q$.  Hence
\begin{equation}\label{eq:pair-crude-local}
 N(\rho)\ll (1+\rho T)X^\delta.
\end{equation}

For a second estimate, let
$H_{q,2}=\{r\bmod q:0\le r<q,\ r/q\in I_2\}$.  Its length is $O(1+\rho q)$.
For rich $q$, Lemma~\ref{lem:bilinear-count}, applied with $P=P_q$,
$M=M_q'$, and $H=H_{q,2}$, gives after summing over the rich moduli
\begin{align}
 N(\rho)
 &\ll (\rho+T^{-1})T^\beta\sum_{\substack{T\le q<6T\\q\ \mathrm{rich}}}|P_q|
 +T^{1/2+\beta/2+\eps}\sum_{\substack{T\le q<6T\\q\ \mathrm{rich}}}|P_q|^{1/2}\notag\\
 &\ll (\rho+T^{-1})T^\beta X^\delta
 +T^{1+\beta/2+\eps}X^{\delta/2}.
 \label{eq:pair-bilinear-local}
\end{align}
In the last line we used \eqref{eq:local-p-mass} and Cauchy--Schwarz.

\medskip
\noindent\emph{Step 3: proof of \eqref{eq:pair-local-s}.}
If $X\le2T$, then $1+\rho T\ll1$, and \eqref{eq:pair-crude-local} gives
$N(\rho)\ll X^\delta\le X^s$.  If $X\ge T^2$, \eqref{eq:pair-total-upper} gives
$N(\rho)\ll T^{2s}\le X^s$.

It remains to consider $2T<X<T^2$.  In this range
$\rho+T^{-1}\ll X/T^2$, so the main term in
\eqref{eq:pair-bilinear-local} is at most
$$
 T^{\beta-2}X^{\delta+1}
 =X^s\left(\frac{X}{T^2}\right)^{1-\beta/2}
 \le X^s.
$$
For the other contribution, use the better of the crude bound and the error
term in \eqref{eq:pair-bilinear-local}.  After division by $X^s$, these two
bounds become
$$
 f_1(X)=T^{-1}X^{1-\beta/2},
 \qquad
 f_2(X)=T^{1+\beta/2+\eps}X^{-(\delta+\beta)/2}.
$$
The function $f_1$ increases and $f_2$ decreases.  At the left endpoint,
$$
 f_1(2T)\ll T^{-\beta/2}=o(1),
 \qquad
 f_2(2T)\asymp T^{1+\eps-\delta/2}\to\infty,
$$
whereas at the right endpoint,
$$
 f_1(T^2)=T^{1-\beta}\to\infty,
 \qquad
 f_2(T^2)=T^{1+\eps-s}=o(1).
$$
Here $0<\beta<1$ and $s>1+\eps$.  Thus the graphs cross exactly once in the
intermediate range.  At the crossing,
$$
 X=T^{(2+\beta/2+\eps)/(1+\delta/2)},
$$
and their common value is
$$
 T^{-1+(1-\beta/2)(2+\beta/2+\eps)/(1+\delta/2)}.
$$
Thus, in order that the better of the two bounds remain uniformly bounded
(and in fact be $o(1)$ at the crossing), we need
$$
 \left(1-\frac\beta2\right)
 \frac{2+\beta/2+\eps}{1+\delta/2}<1,
$$
which is the second condition in \eqref{eq:eps-choice-pair}.  Setting
$\eps=0$ and expanding gives
$$
 \beta^2+2\beta+2\delta-4>0.
$$
Since $\beta>0$, this is equivalent to
$$
 \beta>-1+\sqrt{5-2\delta}.
$$
This is the origin of the extra lower bound on $\beta$ in \eqref{eq:beta-range}.
Since $\min\{f_1,f_2\}$ increases up to the crossing and decreases afterwards,
it is bounded throughout the intermediate range.  This proves
\eqref{eq:pair-local-s}.

\medskip
\noindent\emph{Step 4: proof of \eqref{eq:pair-local-a}.}
For $\rho\ge1$ the claim follows from
the total bound \eqref{eq:pair-total-upper}, since $\rho^\alpha\ge1$.
Thus assume $T^{-\chi}\le\rho\le1$.  Since $\chi<1$, we have
$\rho\gg T^{-1}$ and $X\asymp\rho T^2$.  The main term in
\eqref{eq:pair-bilinear-local} is then
$$
 \ll \rho T^\beta(\rho T^2)^\delta
 =\rho^\alpha T^{2s}.
$$
The ratio of the error term to $\rho^\alpha T^{2s}$ is
$$
 T^{1-s+\eps}\rho^{-1-\delta/2}
 \le T^{1-s+\eps+\chi(1+\delta/2)}\le1
$$
by \eqref{eq:chi-choice}.  This completes the proof.
\end{proof}

\begin{lemma}\label{lem:mass-distribution}
Let $0<t<s\le\alpha$, let $R_k\downarrow0$, and let
$R_k\le\eta_k\le R_{k-1}$.  Suppose $\mathcal E_0=\{P_0\}$ and, for
$k\ge1$, $\mathcal E_k$ is a finite family of nonempty compact subsets of
$\R^d$.  Assume that every $Q\in\mathcal E_k$ is contained in a unique parent
$P\in\mathcal E_{k-1}$, and set
$$
 K=\bigcap_{k\ge0}\bigcup_{Q\in\mathcal E_k}Q.
$$
Suppose that the following estimates hold with constants independent of $k$:
\begin{enumerate}[label=\textup{(\roman*)}]
 \item every $Q\in\mathcal E_k$ has diameter $O(R_k)$, and distinct
 level-$k$ sets are separated by $\gg R_k$;
 \item every level-$(k-1)$ parent has at least
 \begin{equation}\label{eq:two-regime-child-lower}
  \gg R_{k-1}^\alpha R_k^{-s}
 \end{equation}
 children;
 \item for every level-$(k-1)$ parent $P$ and every ball $B(x,r)$, the
 number of children of $P$ meeting the ball is
 \begin{align}
  &\ll (r/R_k)^s &&(r\ge R_k),\label{eq:two-regime-local-s}\\
  &\ll r^\alpha R_k^{-s} &&(r\ge\eta_k).
  \label{eq:two-regime-local-a}
 \end{align}
\end{enumerate}
There is a constant $c_0>0$, depending only on the constants in
\textup{(i)}--\textup{(iii)}, such that $\dimH K\ge t$ provided that there is
an index $k_0$ for which
\begin{equation}\label{eq:mass-distribution-conditions}
 R_{k-1}^{t-\alpha}R_k^{s-t}\le c_0\quad(k\ge k_0),
 \qquad
 \sup_{k\ge k_0}R_{k-1}^{t-\alpha}\eta_k^{s-t}<\infty.
\end{equation}
\end{lemma}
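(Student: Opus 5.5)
We use the mass distribution principle. Define a probability measure $\mu$ on $K$ by splitting mass equally among children: $\mu(P_0)=1$, and if $P\in\mathcal E_{k-1}$ has $N(P)$ children, each child receives $\mu(P)/N(P)$. Kolmogorov consistency and compactness then give a Borel probability measure supported on $K$.

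\emph{Mass bound on cylinders.} By (ii), $\mu(Q)\le \prod_{j\le k} C\,R_{j-1}^{-\alpha}R_j^{s}$ for $Q\in\mathcal E_k$. We compare this product with $R_k^t$. Write
$$
\frac{\mu(Q)}{R_k^{t}}\le C' \prod_{j=1}^{k}\bigl(C R_{j-1}^{t-\alpha}R_j^{s-t}\bigr)\cdot R_0^{\,\cdot}\ldots
$$
The exact telescoping is $\prod_j R_{j-1}^{-\alpha}R_j^{s}=R_k^{t}\prod_j R_{j-1}^{-\alpha+t}R_j^{s-t}\cdot R_0^{-t}$, since the factors $R_j^{t}$ telescope. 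Choosing $c_0=1/C$ and using the first condition in \eqref{eq:mass-distribution-conditions}, each factor with $j\ge k_0$ is at most $1$. The finitely many factors with $j<k_0$ contribute a constant. Hence $\mu(Q)\ll R_k^t$ for every $Q\in\mathcal E_k$. The same computation at level $k-1$ gives $\mu(P)\ll R_{k-1}^{t}\prod_{j<k}(\cdots)$, and we keep the product form $\mu(P)\le C_k^{*}R_{k-1}^{\alpha}R_k^{-s}\mu(Q)$-type relations for the next step.

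\emph{Balls.} Fix a ball $B(x,r)$ with $r$ small, and choose $k$ with $R_k\le r<R_{k-1}$. By the separation in (i), with $r$ below a fixed multiple of $R_{k-1}$, the ball meets only $O(1)$ level-$(k-1)$ parents. Fix one such parent $P$. Each child carries mass $\mu(P)/N(P)\ll \mu(P)R_{k-1}^{-\alpha}R_k^{s}$, and we split into two regimes.

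If $r\ge\eta_k$, then \eqref{eq:two-regime-local-a} bounds the number of children met by $r^\alpha R_k^{-s}$. The mass is therefore $\ll\mu(P)(r/R_{k-1})^\alpha$. Using $\mu(P)\ll R_{k-1}^t$ and $t<\alpha$, this is at most $r^{t}$.

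If $R_k\le r<\eta_k$, then \eqref{eq:two-regime-local-s} gives the mass bound
$$
\ll \mu(P)R_{k-1}^{-\alpha}R_k^{s}(r/R_k)^s=\mu(P)R_{k-1}^{-\alpha}r^{s}.
$$
With $\mu(P)\ll R_{k-1}^t$, this equals $R_{k-1}^{t-\alpha}r^{s-t}\,r^t$. Since $r<\eta_k$ and $s>t$, we have $R_{k-1}^{t-\alpha}r^{s-t}\le R_{k-1}^{t-\alpha}\eta_k^{s-t}$, which is bounded by the second condition in \eqref{eq:mass-distribution-conditions}.

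In both regimes $\mu(B(x,r))\ll r^t$. The mass distribution principle then gives $\dimH K\ge t$.

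The main obstacle is the intermediate regime $R_k\le r<\eta_k$. There the crude bound $\mu(P)\ll R_{k-1}^t$ must be combined correctly with the $s$-local count. The second condition in \eqref{eq:mass-distribution-conditions} is exactly what makes this work. The hypothesis $s\le\alpha$ is used only to guarantee that the two regimes patch together consistently at $r=\eta_k$.
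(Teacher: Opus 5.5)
Your proposal is correct and follows the paper's proof essentially step for step: the equal-splitting measure, the cylinder bound $\mu(Q)\ll R_k^t$ (your telescoping product is just the unrolled form of the paper's induction, with $c_0$ taken equal to the constant in (ii)), and then the same two regimes $r\ge\eta_k$ and $R_k\le r<\eta_k$ for a ball that meets $O(1)$ level-$(k-1)$ parents. One minor point: neither your argument nor the paper's actually uses the hypothesis $s\le\alpha$, so your closing remark that it is needed to patch the two regimes together at $r=\eta_k$ can be dropped.
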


\begin{proof}
Hypothesis \textup{(ii)} makes $K$ a nonempty compact set.  Define a probability
measure $\nu$ on $K$ by assigning mass $1$ to $P_0$ and dividing the mass of
each parent equally among its children.  The resulting consistent masses on
the refining level partitions define a Borel probability measure on $K$.
Thus, if $N(P)$ is the number of children of $P$ and $Q$ is one of them,
\textup{(ii)} gives
\begin{equation}\label{eq:child-mass-bound}
 \nu(Q)=\frac{\nu(P)}{N(P)}
 \ll \nu(P)R_{k-1}^{-\alpha}R_k^s.
\end{equation}

We next prove
\begin{equation}\label{eq:two-regime-cylinder-mass}
 \nu(Q)\ll R_k^t\qquad(Q\in\mathcal E_k)
\end{equation}
by induction.  Let $k_0$ be as in \eqref{eq:mass-distribution-conditions}.  Since there are only finitely many sets in
the levels $0,\ldots,k_0$, choose $C$ so that
$\nu(Q)\le C R_j^t$ for every $Q\in\mathcal E_j$ with $j\le k_0$.
Let $c_b>0$ be the constant in \textup{(ii)}, so that
$N(P)\ge c_bR_{k-1}^\alpha R_k^{-s}$.  If
$\nu(P)\le C R_{k-1}^t$, then
$$
 \nu(Q)
 \le c_b^{-1}C R_k^t
 \bigl(R_{k-1}^{t-\alpha}R_k^{s-t}\bigr).
$$
Choose $c_0\le c_b$.  The first condition in
\eqref{eq:mass-distribution-conditions} then gives
$\nu(Q)\le C R_k^t$, closing the induction and proving
\eqref{eq:two-regime-cylinder-mass}.

Fix a sufficiently small $r$ and choose $k>k_0$ with
$R_k\le r<R_{k-1}$.  By \textup{(i)}, a ball of radius $r$ meets only $O(1)$
level-$(k-1)$ parents, with a uniform constant.  Fix one such parent $P$.
By \eqref{eq:child-mass-bound}, the mass of $B(x,r)\cap P$ is bounded by the
number of children of $P$ meeting $B(x,r)$ multiplied by
$\ll \nu(P)R_{k-1}^{-\alpha}R_k^s$.

If $r\ge\eta_k$, the second local count gives
$$
 \nu(B(x,r)\cap P)
 \ll \nu(P)R_{k-1}^{-\alpha}r^\alpha
 \ll R_{k-1}^{t-\alpha}r^\alpha
 =r^t\left(\frac r{R_{k-1}}\right)^{\alpha-t}
 \le r^t.
$$
If $r<\eta_k$, the first local count gives
$$
 \nu(B(x,r)\cap P)
 \ll R_{k-1}^{t-\alpha}r^s
 =r^t R_{k-1}^{t-\alpha}r^{s-t}
 \le r^t R_{k-1}^{t-\alpha}\eta_k^{s-t}
 \ll r^t
$$
by the second condition in \eqref{eq:mass-distribution-conditions}.  Summing over the
$O(1)$ possible parents, we obtain
\begin{equation}\label{eq:frostman-ball-bound}
 \nu(B(x,r))\ll r^t
\end{equation}
for all sufficiently small $r$.

The mass distribution principle applied to
\eqref{eq:frostman-ball-bound} now gives $\dimH K\ge t$.
\end{proof}

\begin{proof}[Proof of Theorem~\ref{thm:pair-dim}]
Let $\delta=\delta_A>(\sqrt{21}-1)/4$.  The interval in
\eqref{eq:beta-range} is nonempty: the inequality
$2-2\delta<2\delta-1$ follows from $\delta>3/4$, while
$$
 -1+\sqrt{5-2\delta}<2\delta-1
$$
is equivalent to $4\delta^2+2\delta-5>0$.  Choose $\beta$ in that
interval and retain the notation
$$
 s=\delta+\frac\beta2,\qquad \alpha=\delta+1.
$$
The range \eqref{eq:beta-range} gives $0<\beta<1$, and hence $s<\alpha$.

Fix $0<t<s$.  We construct a compact set
$K_t\subset \mathcal U\cap(F_A\times\R)$ with $\dimH K_t\ge t$.
Let
$$
 \mathcal E_0=\{P_0\},\qquad P_0=F_A(\mathbf{u}_0)\times J_0,
$$
where $\mathbf{u}_0$ is admissible and $J_0\Subset(0,1)$ is a nondegenerate compact
interval.  Put $T_0=R_0=1$.

If $P=F_A(\mathbf{u})\times J\in\mathcal E_{k-1}$ with $k\ge2$, then
\eqref{eq:pair-rectangle-form}, $T_{k-1}\le q<6T_{k-1}$, and
Lemma~\ref{lem:cylapprox} give
$$
 \diam F_A(\mathbf{u})\asymp R_{k-1},\qquad |J|\asymp R_{k-1}.
$$
Since the implicit constants in
\eqref{eq:pair-rectangle-lower}--\eqref{eq:pair-local-a} are independent of
$(\mathbf{u},J)$, the constants required in \textup{(i)}--\textup{(iii)} of
Lemma~\ref{lem:mass-distribution} are independent of $k$ and of $P$.  Decrease
the constant in the lower bound \textup{(ii)}, if necessary, so that it also
covers $P_0$, and let $c_0$ be the constant supplied by that lemma.  Suppose that the
finite family $\mathcal E_{k-1}$ has been constructed.  Choose
$T_k\ge2T_{k-1}$ so large that Lemma~\ref{lem:pair-rectangles} applies to every
parent in $\mathcal E_{k-1}$ and, with
$$
 R_k=T_k^{-2},\qquad \eta_k=T_k^{-\chi},
$$
we have
\begin{equation}\label{eq:pair-parameter-choice}
 \eta_k\le R_{k-1},\qquad
 R_{k-1}^{t-\alpha}R_k^{s-t}\le c_0,
 \qquad
 R_{k-1}^{t-\alpha}\eta_k^{s-t}\le1.
\end{equation}
This is possible because $\mathcal E_{k-1}$ is finite and both
$R_k^{s-t}$ and $\eta_k^{s-t}$ tend to zero as $T_k\to\infty$.  Notice also
that $R_k\le\eta_k$, since $\chi<1<2$.

For each parent $P=F_A(\mathbf{u})\times J$, take the rectangles in
$\mathcal R_{T_k}(\mathbf{u},J)$ as its children, and let $\mathcal E_k$ be their
union.  Children of the same parent are $\gg R_k$-separated by
Lemma~\ref{lem:pair-rectangles}.  If two children have distinct parents, the induction hypothesis gives
distance $\gg R_{k-1}$ between those parents, hence between the children; this
is $\gg R_k$ because $T_k\ge2T_{k-1}$.  Hence the level-$k$ rectangles are
$\gg R_k$-separated and every child has a unique parent.  Define
\begin{equation}\label{eq:pair-cantor-set}
 K_t=\bigcap_{k\ge0}\bigcup_{P\in\mathcal E_k}P.
\end{equation}
The unions are nonempty nested compact sets, so $K_t$ is nonempty and compact.

We verify the hypotheses of Lemma~\ref{lem:mass-distribution}.  Every
level-$k$ rectangle has diameter $O(R_k)$.  If $P=F_A(\mathbf{u})\times J\in\mathcal E_{k-1}$ with $k\ge2$, then
$\diam F_A(\mathbf{u})\asymp R_{k-1}$ and $|J|\asymp R_{k-1}$, as above.  Therefore
\eqref{eq:pair-rectangle-lower} gives
$$
 \#\{\text{children of }P\}
 \gg R_{k-1}^{\delta+1}R_k^{-s}
 =R_{k-1}^{\alpha}R_k^{-s}.
$$
For $P_0$, the same lower bound holds after the one-time decrease of its
constant made above.  The
two local counting hypotheses follow from
\eqref{eq:pair-local-s}--\eqref{eq:pair-local-a}, because for $r\ge R_k$,
$$
 1+rT_k^2\asymp r/R_k,
 \qquad T_k^{2s}=R_k^{-s}.
$$
Moreover $R_k\le\eta_k$ because $\chi<2$, while
\eqref{eq:pair-parameter-choice} gives $\eta_k\le R_{k-1}$ and both
inequalities in \eqref{eq:mass-distribution-conditions}.  Hence
Lemma~\ref{lem:mass-distribution} yields
$$
 \dimH K_t\ge t.
$$

It remains to identify the points of $K_t$.  Since distinct level-$k$
rectangles are disjoint, each $(\xi,\zeta)\in K_t$ belongs to a unique
$P_k\in\mathcal E_k$; the unique-parent property gives
$P_{k+1}\subset P_k$.  Write $P_k=Q(q_k,p_k,b_k)$ as in
\eqref{eq:pair-rectangle-form}, and let $r_k\in\{1,\ldots,q_k-1\}$ be the
least positive residue satisfying $r_k\equiv p_kb_k\pmod{q_k}$.  Then $T_k\le q_k<6T_k$, and the definition of $P_k$ together with
\eqref{eq:KA-choice} gives
\begin{equation}\label{eq:pair-limit-approx}
 \left|\xi-\frac{p_k}{q_k}\right|\le\frac{K_A}{q_k^2},
 \qquad
 \left|\zeta-\frac{r_k}{q_k}\right|\le\frac{K_A}{q_k^2}.
\end{equation}
The first-coordinate words are nested.  Their denominators tend to infinity,
so their lengths tend to infinity and they determine an infinite bounded-digit
expansion; thus $\xi\in F_A^\infty$.  Lemma~\ref{lem:limit-pair}, applied with $(q,p,a,r)=(q_k,p_k,b_k,r_k)$
and $K=K_A$, gives $(\xi,\zeta)\in \mathcal U$.  Consequently
$$
 K_t\subset \mathcal U\cap(F_A^\infty\times\R).
$$

Since this construction works for every $t<s$,
$$
 \dimH\bigl(\mathcal U\cap(F_A\times\R)\bigr)\ge s
 =\delta+\frac\beta2.
$$
Letting $\beta\uparrow2\delta-1$ gives
$$
 \dimH\bigl(\mathcal U\cap(F_A\times\R)\bigr)
 \ge2\delta_A-\frac12.
$$
Finally, $F_A^\infty\subset\Bad_1$ and $\delta_A\to1$, so taking the supremum
over $A$ gives
$$
 \dimH\bigl(\mathcal U\cap(\Bad_1\times\R)\bigr)\ge\frac32.
$$
\end{proof}

\begin{proof}[Proof of Corollary~\ref{cor:main}]
Choose $A$ with $\delta_A>(\sqrt{21}-1)/4$.  Theorem~\ref{thm:pair-dim} gives
$$
 \dimH\bigl(\mathcal U\cap(F_A\times\R)\bigr)>0,
$$
so the set is nonempty.  Its first coordinate cannot be rational: if
$\xi=u/v$, then for every $Q\ge v$ the choice $n=v$ gives $\|n\xi\|=0$.
Thus the first coordinate lies in $F_A^\infty\subset\Bad_1$, which proves the
corollary.
\end{proof}

\section{Full dimension in first coordinate}\label{sec:first-coordinate}

We prove Theorem~\ref{thm:first-dim} separately because the second coordinate
is used only to choose intervals that make the pairs belong to $\mathcal U$;
it does not enter the dimension count.

\begin{lemma}\label{lem:many-good-p}
Fix $A$ with $\delta=\delta_A>3/4$.  Let $\mathbf{u}$ be an admissible word and let
$J\subset(0,1)$ be a nondegenerate interval.  There is
$T_0=T_0(\mathbf{u},J)$ such that, for every $T\ge T_0$,
\begin{equation}\label{eq:many-good-uniform}
 \#\left\{(p,q):
 \begin{array}{l}
 T\le q<6T,\ p\in M_A(q;F_A(\mathbf{u})),\\
 \text{there exist }b\in M_A(q)\text{ and }1\le r<q\\
 \text{ with }r\equiv pb\pmod q\text{ and }r/q\in J
 \end{array}\right\}
 \gg (\diam F_A(\mathbf{u}))^\delta T^{2\delta}.
\end{equation}
There is a subcollection containing a fixed positive proportion of these pairs
such that the sets
$F_A(\mathbf{w}(p/q))$ are mutually $\gg T^{-2}$-separated.  The implicit constants are independent of $\mathbf{u}$ and $J$.
\end{lemma}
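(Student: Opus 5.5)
The plan is to reuse the Step~1 construction of Lemma~\ref{lem:pair-rectangles}, now with the full set $M_A(q)$ in place of a trimmed set of size $T^\beta$. In that argument only the upper bound $|M_q'|\le T^\beta$ was used to control the bilinear error. Here we only need to know that \emph{some} multiplier works, so we apply Corollary~\ref{lem:interval-hitting} instead of Lemma~\ref{lem:bilinear-count}.

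\textbf{Step 1: rich moduli.} Fix a parameter $\beta$ with $2-2\delta<\beta<2\delta-1$; this range is nonempty exactly because $\delta>3/4$. Call $q\in[T,6T)$ rich if $|M_A(q)|\ge T^\beta$. As in Step~1 of Lemma~\ref{lem:pair-rectangles}, the non-rich moduli contribute $O(T^{1+\beta})=o(T^{2\delta})$ pairs $(q,p)$. So by Lemma~\ref{lem:fixed-prefix-count}, once $T\ge T_0(\mathbf u,J)$ the rich moduli carry $\gg(\diam F_A(\mathbf u))^\delta T^{2\delta}$ pairs $(p,q)$ with $p\in M_A(q;F_A(\mathbf u))$.

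\textbf{Step 2: bad $p$ are rare.} Fix a rich $q$. Let $H_q=\{r\bmod q: r/q\in J',\,1\le r<q\}$ for a subinterval $J'\subset J$ with $|J'|\ge|J|/3$. This is an interval in $\Z/q\Z$ of length $\ge d q$ with $d\asymp|J|$. A pair $(p,q)$ fails the condition in \eqref{eq:many-good-uniform} precisely when $p\,M_A(q)\cap H_q=\varnothing$, that is, when $p$ lies in the exceptional set $E$ of Corollary~\ref{lem:interval-hitting} with $M=M_A(q)\subseteq(\Z/q\Z)^\times$. That corollary gives
$$|E|\ll_{J,\eps} \frac{q^{1+\eps}}{T^\beta}.$$
Summing over the $O(T)$ moduli, the number of bad pairs is $O(T^{2-\beta+\eps})$. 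This is $o(T^{2\delta})$ since $\beta>2-2\delta$ and $\eps$ is small. Subtracting, the good pairs number $\gg(\diam F_A(\mathbf u))^\delta T^{2\delta}$ after enlarging $T_0(\mathbf u,J)$.

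The constant in Corollary~\ref{lem:interval-hitting} depends on $d$, hence on $J$. This is harmless: the dependence is absorbed into the threshold $T_0$, because the bad count has strictly smaller exponent. The implicit constant in the lower bound therefore still comes only from Lemma~\ref{lem:fixed-prefix-count}, which is uniform in $\mathbf u$.

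\textbf{Step 3: separation.} Distinct reduced fractions with denominators in $[T,6T)$ are $(36T^2)^{-1}$-separated, and each cylinder $F_A(\mathbf w(p/q))$ has diameter $\le K_AT^{-2}$ and contains $p/q$ by \eqref{eq:KA-choice}. A greedy maximal selection of points $p/q$ that are $C_AT^{-2}$-separated, with $C_A>2K_A+1$, therefore discards only $O_A(1)$ fractions per selected one. The selected cylinders are then mutually $\gg T^{-2}$-separated, and the subcollection is a fixed positive proportion of the good pairs.

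The only real obstacle is Step~2, namely balancing the exponents. The bilinear exceptional-set bound needs $|M_A(q)|$ large for the $q$'s being counted, while discarding the moduli where it is small must cost less than $T^{2\delta}$. Both requirements fit together exactly when $2-2\delta<2\delta-1$, which is where the hypothesis $\delta>3/4$ enters.
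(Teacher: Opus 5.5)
Your proposal is correct and follows essentially the same route as the paper: the same choice $2-2\delta<\beta<2\delta-1$, discarding non-rich moduli at cost $O(T^{1+\beta})$, bounding the exceptional residues on rich moduli via Corollary~\ref{lem:interval-hitting} at total cost $O(T^{2+\eps-\beta})$ with the $J$-dependence absorbed into $T_0(\mathbf u,J)$, and the same Farey-separation greedy selection. The only difference is cosmetic: your shrunken interval $J'\subset J$ is unnecessary here, since the paper takes $H_q$ directly from $J$, but it is harmless because hitting $J'$ implies hitting $J$.
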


\begin{proof}
Choose
$$
 2-2\delta<\beta<2\delta-1,
$$
which is possible because $\delta>3/4$.

Lemma~\ref{lem:fixed-prefix-count} gives
\begin{equation}\label{eq:prefix-lower-for-good}
 \sum_{T\le q<6T}|M_A(q;F_A(\mathbf{u}))|
 \gg (\diam F_A(\mathbf{u}))^\delta T^{2\delta}.
\end{equation}
Call a modulus $q$ \emph{rich} if $|M_A(q)|\ge T^\beta$.  The non-rich
moduli contribute at most $T^{1+\beta}$ to the sum in
\eqref{eq:prefix-lower-for-good}.

For each rich modulus, define
$$
 H_q=\{r\bmod q:1\le r<q,\ r/q\in J\},
 \qquad
 E_q=\{e\in(\Z/q\Z)^\times:eM_A(q)\cap H_q=\varnothing\}.
$$
The set $E_q$ consists precisely of the first-coordinate residues that cannot
be paired with any bounded-digit multiplier to hit $J$.  Since
$|H_q|\gg_J q$ and $|M_A(q)|\ge T^\beta$, Corollary~\ref{lem:interval-hitting} gives
$$
 |E_q|\ll_{A,J,\eps}T^{1+\eps-\beta}.
$$
Thus the exceptional pairs contribute
$O_{A,J,\eps}(T^{2+\eps-\beta})$.  Choose $\eps>0$ with
$2+\eps-\beta<2\delta$.  Since also $1+\beta<2\delta$, both losses have
strictly smaller powers of $T$ than \eqref{eq:prefix-lower-for-good}.  After
fixing $(\mathbf{u},J)$, the factor $(\diam F_A(\mathbf{u}))^\delta$ in
\eqref{eq:prefix-lower-for-good} and the $J$-dependent implied constant above
are fixed; hence $T_0(\mathbf{u},J)$ may be enlarged so that both losses are
absorbed.  The lower-bound constant in \eqref{eq:many-good-uniform} remains
independent of $\mathbf{u}$ and $J$.

Finally, distinct reduced fractions with denominators below $6T$ are
$(36T^2)^{-1}$-separated, while Lemma~\ref{lem:cylapprox} gives cylinder
diameter $O_A(T^{-2})$.  Choose a maximal subcollection whose rational
endpoints are separated by $C_AT^{-2}$, with $C_A$ larger than twice the
cylinder-diameter constant.  The Farey separation shows that each selected
endpoint accounts for only $O_A(1)$ original endpoints, so this subcollection
contains a fixed positive proportion of the pairs in
\eqref{eq:many-good-uniform}; its cylinders are mutually $\gg_A T^{-2}$-separated.
\end{proof}

\begin{proof}[Proof of Theorem~\ref{thm:first-dim}]
Fix $A$ with $\delta=\delta_A>3/4$.  We first prove
$\dimH\cP_A\ge\delta$.

Fix $0<t<\delta$, a nonempty cylinder $C_0=F_A(\mathbf{w}_0)$, and a compact interval
$J_0\Subset(0,1)$ with nonempty interior.  At each stage we keep pairs $(C,J)$,
where $C$ is a first-coordinate cylinder and $J$ is a closed second-coordinate
interval.  Only the cylinders $C$ enter the dimension estimate.  Let $\mathcal D_0=\{(C_0,J_0)\}$ and put $T_0=R_0=1$.

We shall use the following estimate, uniform in the admissible word
$\mathbf{u}$:
\begin{equation}\label{eq:first-local-count}
 \#\left\{(p,q):
 \begin{array}{l}
 T\le q<6T,\ p\in M_A(q;F_A(\mathbf{u})),\\
 F_A(\mathbf{w}(p/q))\cap B(x,r)\ne\varnothing
 \end{array}\right\}
 \ll (rT^2)^\delta\qquad(r\ge T^{-2}).
\end{equation}
It follows from Lemmas~\ref{lem:cylapprox} and
\ref{lem:local-rational-upper}, and remains valid after restricting the set of
pairs $(p,q)$.

If $C\in\mathcal C_{k-1}$ with $k\ge2$, then
$C=F_A(\mathbf{w}(p/q))$ for some $T_{k-1}\le q<6T_{k-1}$, and
Lemma~\ref{lem:cylapprox} gives $\diam C\asymp R_{k-1}$.  Hence
Lemma~\ref{lem:many-good-p} gives
$$
 \#\{\text{children of }C\}\gg R_{k-1}^\delta R_k^{-\delta}
$$
with a constant independent of $C$, while \eqref{eq:first-local-count} gives
the local estimate in \textup{(iii)} with a constant independent of $C$.
Only the threshold in Lemma~\ref{lem:many-good-p} depends on $(C,J)$.
Decrease the constant in the lower bound \textup{(ii)}, if necessary, so that
it also covers $C_0$, and let $c_0$ be the constant supplied by
Lemma~\ref{lem:mass-distribution}.  Suppose that the finite family
$\mathcal D_{k-1}$ has been constructed.  For each parent $(C,J)$
choose a compact interval $J^*\Subset J$ with nonempty interior.  Since there are only finitely many parents, we may choose
$T_k\ge2T_{k-1}$ so large that all of the following hold simultaneously:
Lemma~\ref{lem:many-good-p} applies to every pair $(C,J^*)$; the radius
$K_AT_k^{-2}$ is smaller than the distance from every $J^*$ to the boundary of
its parent interval $J$; and, with $R_k=T_k^{-2}$,
\begin{equation}\label{eq:first-parameter-choice}
 \left(\frac{R_k}{R_{k-1}}\right)^{\delta-t}\le c_0.
\end{equation}

For each parent, use the separated subfamily supplied by
Lemma~\ref{lem:many-good-p}.  For every pair $(p,q)$ in this subfamily, choose
$b\in M_A(q)$ and $1\le r<q$ such that
$r\equiv pb\pmod q$ and $r/q\in J^*$.  Associate to $(p,q)$ the pair
$$
 C'=F_A(\mathbf{w}(p/q)),
 \qquad
 J'=\left[\frac rq-\frac{K_A}{q^2},
          \frac rq+\frac{K_A}{q^2}\right]\subset J.
$$
Let $\mathcal D_k$ be the family of all such pairs $(C',J')$, and let
$\mathcal C_k$ be their first-coordinate cylinders.  Children of the same
parent are $\gg R_k$-separated by Lemma~\ref{lem:many-good-p}.  Children of
distinct parents lie in level-$(k-1)$ cylinders separated by
$\gg R_{k-1}\gg R_k$, by the induction hypothesis and $T_k\ge2T_{k-1}$.
Thus the level-$k$ cylinders are $\gg R_k$-separated, and every pair in
$\mathcal D_k$ arises from a unique pair in $\mathcal D_{k-1}$.  Define
\begin{equation}\label{eq:first-cantor-set}
 E_t=\bigcap_{k\ge0}\bigcup_{C\in\mathcal C_k}C,
 \qquad \mathcal C_0=\{C_0\}.
\end{equation}
The unions are nested nonempty compact sets, so $E_t$ is compact and nonempty.

We apply Lemma~\ref{lem:mass-distribution} to the families
$\mathcal C_k$.  Every level-$k$ cylinder has diameter $O(R_k)$, and the level-$k$ cylinders are $\gg R_k$-separated.  For $C\in\mathcal C_{k-1}$ with $k\ge2$,
Lemma~\ref{lem:many-good-p} and $\diam C\asymp R_{k-1}$ give
$$
 \#\{\text{children}\}
 \gg R_{k-1}^\delta R_k^{-\delta}.
$$
For $C_0$, the same lower bound holds by the one-time choice of the constant
made above.  The local estimate is \eqref{eq:first-local-count}.  We may therefore
take
$$
 s=\alpha=\delta,\qquad \eta_k=R_k
$$
in Lemma~\ref{lem:mass-distribution}.  With these choices, the two left-hand
sides in \eqref{eq:mass-distribution-conditions} are both
$$
 \left(\frac{R_k}{R_{k-1}}\right)^{\delta-t},
$$
which is at most $c_0$ by \eqref{eq:first-parameter-choice}.  Therefore
$$
 \dimH E_t\ge t.
$$

It remains to show that every $\xi\in E_t$ can be paired with some
$\zeta\in\R$.  Since the level-$k$ cylinders are pairwise disjoint, $\xi$
belongs to a unique $C_k\in\mathcal C_k$.  By construction there is a unique
$J_k$ with $(C_k,J_k)\in\mathcal D_k$, and the construction gives
$C_{k+1}\subset C_k$ and $J_{k+1}\subset J_k$.  Since $\diam J_k\to0$, the intervals $J_k$ determine a unique
$\zeta\in\bigcap_kJ_k$.  Write $p_k,q_k,b_k,r_k$ for the integers chosen in the
definition of the pair $(C_k,J_k)$.  Then \eqref{eq:KA-choice} and the
definition of $J_k$ give
$$
 \left|\xi-\frac{p_k}{q_k}\right|\le\frac{K_A}{q_k^2},
 \qquad
 \left|\zeta-\frac{r_k}{q_k}\right|\le\frac{K_A}{q_k^2},
 \qquad
 r_k\equiv p_kb_k\pmod{q_k}.
$$
The first-coordinate words are nested and $q_k\to\infty$, so their lengths
tend to infinity and $\xi\in F_A^\infty\subset\Bad_1$.  Lemma
\ref{lem:limit-pair} gives $(\xi,\zeta)\in \mathcal U$.  Thus
$E_t\subset\cP_A$.  Letting
$t\uparrow\delta$ yields $\dimH\cP_A\ge\delta$.  The reverse inequality follows
from $\cP_A\subset F_A$ and $\dimH F_A=\delta_A$, so
$$
 \dimH\cP_A=\delta_A.
$$

It remains to prove the local full-dimension statement for $\cP$.  Let
$U\subset\R$ be a nonempty bounded open interval.  Choose $m\in\Z$ such that
$V=(U-m)\cap(0,1)$ contains a nonempty open interval, and choose an irrational
point $x=[0;b_1,b_2,\ldots]$ in that interval.  The full continued-fraction
intervals determined by the prefixes of $x$ shrink to $x$ by
\eqref{eq:inverse-branch}; hence some prefix
$\mathbf{w}=(b_1,\ldots,b_n)$ has its full interval contained in $V$.

Given $0<\eps<1$, choose $A$ so large that
$$
 A\ge\max\{b_1,\ldots,b_n\},
 \qquad
 \delta_A>\max\{3/4,1-\eps\},
$$
and then choose $1-\eps<t<\delta_A$.  Running the preceding construction
inside $F_A(\mathbf{w})$ produces a set
$E_t\subset\cP_A\cap V$ with $\dimH E_t\ge t>1-\eps$.

Finally,
$$
 \|n(\xi+m)\|=\|n\xi\|\qquad(n\in\Z),
$$
so integer translation preserves both bad approximability and membership in
the first-coordinate projection of $\mathcal U$.  Hence
$m+E_t\subset\cP\cap U$, and
$$
 \dimH(\cP\cap U)\ge1-\eps.
$$
Letting $\eps\downarrow0$ gives $\dimH(\cP\cap U)=1$.  In particular,
$\dimH\cP=1$.
\end{proof}

\section*{Acknowledgements}
The author thanks Nikolay Moshchevitin for helpful discussions and Johannes Schleischitz for useful comments on the draft. The majority of the work was done during the Simons Semesters in Będlewo and Warsaw, and the author is very grateful to the organizers for hosting this event.

This work was partially supported by the Simons Foundation grant (award no. SFI-MPS-T-Institutes-00010825) and from State Treasury funds as part of a task commissioned by the Minister of Science and Higher Education under the project “Organization of the Simons Semesters at the Banach Center - New Energies in 2026-2028” (agreement no. MNiSW/2025/DAP/491).


\begin{thebibliography}{99}

\bibitem{BadziahinPollingtonVelani}
D.~Badziahin, A.~Pollington and S.~Velani,
\emph{On a problem in simultaneous Diophantine approximation:
Schmidt's conjecture},
Ann. of Math. (2) \textbf{174} (2011), no.~3, 1837--1883.

\bibitem{BandiFregoliKleinbock}
P.~Bandi, R.~Fregoli and D.~Kleinbock,
\emph{Submanifold-genericity of $\R^d$-actions and uniform multiplicative
Diophantine approximation},
arXiv:2504.02258, 2025.

\bibitem{BourgainKontorovich}
J.~Bourgain and A.~Kontorovich,
\emph{On Zaremba's conjecture},
Ann. of Math. (2) \textbf{180} (2014), no.~1, 137--196.

\bibitem{EinsiedlerKatokLindenstrauss}
M.~Einsiedler, A.~Katok and E.~Lindenstrauss,
\emph{Invariant measures and the set of exceptions to Littlewood's conjecture},
Ann. of Math. (2) \textbf{164} (2006), no.~2, 513--560.

\bibitem{Hensley1989}
D.~Hensley,
\emph{The Hausdorff dimensions of some continued fraction Cantor sets},
J. Number Theory \textbf{33} (1989), no.~2, 182--198.

\bibitem{Hensley1990}
D.~Hensley,
\emph{The distribution of badly approximable rationals and continuants with bounded digits, II},
J. Number Theory \textbf{34} (1990), no.~3, 293--334.

\bibitem{Hensley1992}
D.~Hensley,
\emph{Continued fraction Cantor sets, Hausdorff dimension, and functional analysis},
J. Number Theory \textbf{40} (1992), no.~3, 336--358.

\bibitem{KhinchinCF}
A.~Ya. Khinchin,
\emph{Continued fractions},
University of Chicago Press, Chicago, 1964; reprint, Dover, Mineola, NY, 1997.

\bibitem{MoshchevitinULC}
N.~Moshchevitin,
\emph{A note on uniform version of Littlewood inequality and Fibonacci numbers},
arXiv:2605.26188, 2026.

\bibitem{PollingtonVelaniLittlewood}
A.~D. Pollington and S.~L. Velani,
\emph{On a problem in simultaneous Diophantine approximation:
Littlewood's conjecture},
Acta Math. \textbf{185} (2000), no.~2, 287--306.

\bibitem{SchleischitzULC}
J.~Schleischitz,
\emph{Disproof of the uniform Littlewood conjecture},
arXiv:2603.12611, 2026.

\bibitem{SchleischitzULCing}
J.~Schleischitz,
\emph{On an inhomogeneous uniform Littlewood type problem},
arXiv:2607.08114, 2026.



\bibitem{Vinogradov}
I.~M. Vinogradov,
\emph{An introduction to the theory of numbers},
Pergamon Press, London, 1955.

\end{thebibliography}
\end{document}